\author[]{Kaveh Eftekharinasab}
\title[]{Transversality and Lipschitz-Fredholm maps }
\address{Topology dept. \\ Institute of Mathematics of NAS of Ukraine \\ Te\-re\-shchen\-kivska st. 3, Kyiv, 01601 Ukraine}
\email{kaveh@imath.kiev.ua}
\keywords{Fr\'{e}chet manifolds, Sard's theorem, Transversality.}
\subjclass[2010]{
58B15, 
57N75, 
58A99 
 }
\newtheorem{theorem}{Theorem}[section]
\newtheorem{lemma}{Lemma}[section]
\newtheorem{remk}{Remark}[section]
\newtheorem{prop}{Proposition}[section]
\newtheorem{defn}{Definition}[section]
\newtheorem*{theorem*}{Theorem}
\newtheorem{cor}{Corollary}[section]
\theoremstyle{definition}
\DeclareMathAlphabet{\mathpzc}{OT1}{pzc}{m}{it}
\newcommand{\rr}{\mathbb{R}}
\newcommand{\nn}{\mathbb{N}}
\newcommand{\ab}{\mathcal{A}}
\newcommand{\mc}{MC^k}
\newcommand{\tn}{\pitchfork}
\DeclareMathOperator{\dd}{d}
\DeclareMathOperator{\codim}{codim}
\DeclareMathOperator{\Ind}{Ind}
\DeclareMathOperator{\Img}{Img}
\DeclareMathOperator{\Aut}{Aut}
\DeclareMathOperator{\Iso}{Iso}
\begin{document}

\begin{abstract}
We study  transversality for Lipschitz-Fredholm maps in the context of
bounded Fr\'{e}chet manifolds. We show that the set of all Lipschitz-Fredholm maps of a fixed index between  Fr\'{e}chet spaces  has
the transverse stability property.
We give a straightforward extension of the Smale transversality theorem by using the generalized Sard's theorem for this category
of manifolds.
We also provide an answer to the well known problem concerning the existence of a submanifold structure on the preimage of a transversal submanifold. 
\end{abstract}
\maketitle
\section{Introduction}
In~\cite{k} we proved a version of the classical Sard-Smale theorem for a category of generalized Fr\'{e}chet manifolds,
bounded (or $\mc$) Fr\'{e}chet manifolds, introduced in~\cite{m}. Our approach to the theorem's generalization  is based on the
assumption that Fredholm operators need to be globally Lipschitz. A reason for this interest is that there exists an appropriate
topology on $\mathcal{L}(E,F)$, the space of all linear globally Lipschitz maps between Fr\'{e}chet spaces $E$ and $F$, that leads to the openness of
the set of linear isomorphisms in $\mathcal{L}(E,F)$~\cite[Proposition 2.2]{k}. This result in turn yields
the openness of the collection of Fredholm operators in $\mathcal{L}(E,F)$~\cite[Theorem 3.2]{k}. The other reason is
that Lipschitzness  is consistent with the notion of differentiability, bounded (or $\mc$-) differentiability, that we apply. 
If $E,F$ are Fr\'{e}chet spaces and if $U$ is an open subset of $E$, a map $f :  U \rightarrow F$
is called bounded (or $MC^1$-) differentiable if it is Keller-differentiable, the directional derivative $\dd f (p)$ belongs to $ \mathcal{L}(E,F)$ for all
$p \in U$, and the induced map
$\dd f : U \rightarrow \mathcal{L}(E,F)$ is continuous.
Thus,  we can naturally define the index of a  Fredholm map between manifolds.

We should point out that  the mentioned results stems from the essential fact  that under a certain condition we can endow the space $\mathcal{L}(E,F)$
with a topological group structure. Also, the group of automorphisms of a Fr\'{e}chet space $E$, $\Aut(E)$, is
open in $\mathcal{L}(E,E)$ \cite[Proposition 2.1]{glockner}. But, in general, the group
of automorphisms of a Fr\'{e}chet space does not admit a non-trivial topological group structure. Thus, without some
restrictions it would be impossible to establish openness of sets of linear isomorphisms and Fredholm operators.
This is a major obstruction in developing the Fredholm theory for Fr\'{e}chet spaces.

A crucial step in the proof of an infinite dimensional version of Sard's theorem is that,
roughly speaking, for a Fredholm map $f:M\rightarrow N$ of manifolds, at each point $p \in M$, we may find local charts $(p \in U \subseteq M,\phi)$ and $(f(p) \in V \subseteq N,\psi)$  
 such that in the charts $f$ has a representation of the form
$f(u,v)=(u,\eta(u,v))$, where $\eta :\phi(U) \rightarrow \rr^n$ is some smooth map. This is a consequence
of an inverse function theorem. One of the main significance of the category of bounded Fr\'{e}chet manifolds
is the availability of an inverse function theorem in the sense of Nash and Moser~\cite[Theorrem 4.7]{m}.
However, the bounded differentiability is strong and in some
cases the class of bounded maps can be quite small, e.g. when the identity component
of $\mathcal{L}(E,F)$ contains only the zero map~\cite[Remark 2.16]{glockner}.

We have argued that why we have utilized this particular category of Fr\'{e}chet manifolds.
A salient example of these manifolds is the space of all smooth sections of a fiber bundle
over closed or non-compact manifolds (\cite[Theorem 3.34]{m}). On the other hand, it turns out that these generalized manifolds 
can surpass the geometry of Fr\'{e}chet manifolds. On these manifold
we are able to give a precise analytic meaning to some essential geometric objects (such as connection maps, vector fields and integral curves)
~\cite{k1}. Therefore, we would expect their applications to problems in global analysis.

The present work studies the differential topology of Lipschitz-Fredholm maps in the bounded Fr\'{e}chet setting. 
We show that the set of Lipschitz-Fredholm operators
of index $l$ between Fr\'{e}chet spaces $E$ and $F$ is open in the space of linear globally Lipschitz maps endowed with the fine topology.
We say that a set of maps has the transverse stability property for the fine topology if
maps in a fine neighborhood of a given map have the same transversality property i.e. if $f:E\rightarrow F$ is a 
map transversal to a closed subspace $\mathbb{F}$ of $F$, then any  map in a fine neighborhood of $f$ is
transversal to $\mathbb{F}$.  We then prove that the set of all Lipschitz-Fredholm maps of a fixed index between  Fr\'{e}chet spaces  has
the transverse stability property. We also study transversality for Lipschitz-Fredholm maps on manifolds. 
We give a straightforward generalization of the Smale transversality theorem (\cite[Theorem 3.1]{smale}) by using our generalized Sard's theorem. 
Finally, we prove that if $f:M \rightarrow N$ is an $\mc$ Lipschitz-Fredholm map of manifolds
which is transversal to a finite dimensional submanifold $\ab$ of $N$, then $f^{-1}(\ab)$ is a submanifold of $M$.

We stress that these results can not be proved without strong restrictions. However,
the basic concepts of infinite dimensional differential topology  such as submanifold and  transversality  
can be simply come over from the Banach setting.

Our motivation for the present work, in the light of~\cite{k1}, lay in the desire to develop transversality tools for 
the degree theory,
including the Leray-Schauder degree, for Lipschitz-Fredholm maps, to derive applications to the study of solutions 
to systems of nonlinear partial differential or integral equations on spaces of smooth sections which are not linear.

\section{Preliminaries}
We shall recall the required definitions from the category of $\mc$ manifolds briefly
but in a self-contained way for the convenience of the reader, which also give us the opportunity to establish our
notations for the rest of the paper. For more studies we refer to~\cite{k,m,k1}.

Let $(F,d)$ be a Fr\'{e}chet space whose topology is defined by a complete translational-invariant metric $d$. 
A metric with absolutely convex balls will be called a standard metric. 
Every Fr\'{e}chet space  admits a standard metric which defines its topology. We shall always define the
topology of Fr\'{e}chet spaces with this type of metrics.

Let $(E,g)$ and $(F,d)$ be Fr\'{e}chet spaces and let $\mathcal{L}_{g,d}(E,F)$ be the set of all 
linear maps $ L: E \rightarrow F $ such that 
\begin{equation*}
\mathpzc{Lip} (L )_{g,d}\, \coloneq \displaystyle \sup_{x \in E\setminus\{0\}} \dfrac{d (L(x),0)}{g( x,0)} < \infty.
\end{equation*}
The transversal-invariant metric 
\begin{equation} \label{metric}  
 D_{g,d}: \mathcal{L}_{g,d}(E,F) \times \mathcal{L}_{g,d}(E,F) \longrightarrow [0,\infty) , \,\,
(L,H) \mapsto \mathpzc{Lip}(L -H)_{g,d} \,,
\end{equation}
on $\mathcal{L}_{d,g}(E,F)$ turns it  into an Abelian topological group (\cite[Remark 2.1]{k}). A map $ \varphi \in \mathcal{L}_{g,d}(E,F) $  
is called Lipschitz-Fredholm operator if its kernel has finite dimension and its image is closed and has finite co-dimension. 
The index of $ \varphi $, $\Ind \varphi$, is defined by 
$
\Ind \varphi = \dim \ker \varphi - \codim \Img \varphi.
$
We denote by $ \mathcal{LF}(E,F) $  the set of all Lipschitz-Fredholm operators, and by $ \mathcal{LF}_l(E,F) $ the subset of $ \mathcal{LF}(E,F) $  
consisting of those operators  of index $l$.
\begin{prop}\cite[Proposition 2.2]{k} \label{open}
 The set of linear isomorphisms from
$ E $ into $ F $, $ \Iso{(E,F)}, $ is open in $ \mathcal{L}_{d,g}(E,F) $ with respect to the topology induced by the Metric \eqref{metric}.
\end{prop}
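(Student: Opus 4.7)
The plan is to adapt the classical Neumann series argument from the Banach setting to this Lipschitz-metric Fréchet setting, exploiting the submultiplicativity of Lipschitz constants under composition together with the fact that the metric $D_{g,d}$ turns $\mathcal{L}_{g,d}(E,F)$ into a (complete) abelian topological group. Throughout I understand $\Iso(E,F)$ to mean those $L_0 \in \mathcal{L}_{g,d}(E,F)$ that are linear bijections whose inverse $L_0^{-1}$ lies in $\mathcal{L}_{d,g}(F,E)$, so that the quantity $c := \mathpzc{Lip}(L_0^{-1})_{d,g}$ is finite.

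First I would fix $L_0 \in \Iso(E,F)$ and, for any $L \in \mathcal{L}_{g,d}(E,F)$, factor $L = L_0 \circ (I_E - A)$, where $A := L_0^{-1}\circ(L_0 - L) \in \mathcal{L}_{g,g}(E,E)$. The point of this factorization is that $L$ is an isomorphism in $\mathcal{L}_{g,d}(E,F)$ precisely when $I_E - A$ is an isomorphism in $\mathcal{L}_{g,g}(E,E)$, and we reduce the whole problem to inverting a perturbation of the identity on $E$. The submultiplicativity $\mathpzc{Lip}(B\circ C) \leq \mathpzc{Lip}(B)\cdot\mathpzc{Lip}(C)$, which follows immediately from the definition of the Lipschitz constant via a translation-invariant standard metric, gives
\begin{equation*}
\mathpzc{Lip}(A)_{g,g} \;\leq\; \mathpzc{Lip}(L_0^{-1})_{d,g}\cdot \mathpzc{Lip}(L_0 - L)_{g,d} \;=\; c\cdot D_{g,d}(L,L_0).
\end{equation*}
Hence the open ball $\{ L : D_{g,d}(L,L_0) < 1/c\}$ is the candidate neighborhood of $L_0$.

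Next I would invert $I_E - A$ via the Neumann series $S_N := \sum_{n=0}^{N} A^n$. Submultiplicativity gives $\mathpzc{Lip}(A^n)_{g,g} \leq \mathpzc{Lip}(A)_{g,g}^n \leq q^n$ with $q := c\cdot D_{g,d}(L,L_0) < 1$, so the partial sums are Cauchy in the translation-invariant metric $D_{g,g}$ on $\mathcal{L}_{g,g}(E,E)$. Using completeness of this metric space (the standard argument via pointwise limits, the finiteness of the geometric series $\sum q^n$, and linearity) one obtains a limit $S \in \mathcal{L}_{g,g}(E,E)$ with $\mathpzc{Lip}(S)_{g,g} \leq 1/(1-q)$. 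A direct computation of $S_N(I_E - A) = I_E - A^{N+1}$, followed by passage to the limit using $\mathpzc{Lip}(A^{N+1}) \to 0$, shows that $S = (I_E - A)^{-1}$ in $\mathcal{L}_{g,g}(E,E)$.

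Finally I would assemble the inverse of $L$ itself as $L^{-1} = (I_E - A)^{-1}\circ L_0^{-1} = S\circ L_0^{-1} \in \mathcal{L}_{d,g}(F,E)$, again bounding its Lipschitz constant by $\mathpzc{Lip}(S)_{g,g}\cdot\mathpzc{Lip}(L_0^{-1})_{d,g}$. This exhibits the explicit open ball around $L_0$ in the metric $D_{g,d}$ inside $\Iso(E,F)$. The main obstacle, and the step that departs nontrivially from the Banach case, is the convergence argument in the third step: one must verify that the algebraic identities involving the Neumann partial sums survive the passage to the limit in the Lipschitz-induced topology, rather than merely in a pointwise or seminorm sense. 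This hinges critically on the fact that the translation-invariant metric $D_{g,g}$ coming from the Lipschitz constant is compatible with composition in the correct quantitative way, which is exactly what the group structure observed in \cite[Remark 2.1]{k} provides.
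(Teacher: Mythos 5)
Your proof is correct and follows essentially the same route as the cited source: the paper itself only quotes this as \cite[Proposition 2.2]{k}, and the argument there (going back to Gl\"{o}ckner's openness of $\Aut(E)$) is precisely this reduction of $L$ near $L_0$ to a perturbation $I_E-A$ of the identity with $\mathpzc{Lip}(A)<1$, inverted by a Neumann series using subadditivity and submultiplicativity of $\mathpzc{Lip}$ for the translation-invariant metrics. Your pointwise-limit construction of $S$ (using completeness of $E$) supplies exactly what is needed, so no gap remains.
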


\begin{theorem}[\cite{k}, Theorem 3.2]\label{th}
$ \mathcal{LF}(E,F) $ is open in $\mathcal{L}_{g,d}(E,F)$ with respect to the topology defined by the Metric
 \eqref{metric}. Furthermore, the function $ T \rightarrow \Ind T $ is continuous on $ \mathcal{LF}(E,F) $,
 hence constant on connected components of $ \mathcal{LF}(E,F) $.
\end{theorem}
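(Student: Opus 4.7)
The plan is to reduce the perturbation analysis for a given $T \in \mathcal{LF}_l(E,F)$ to an elementary finite-dimensional computation by putting $T$ and all nearby operators into a common block form and then stripping off the unwanted blocks by Gaussian elimination. Throughout, Proposition~\ref{open} (openness of $\Iso$ in the metric $D_{g,d}$) plays the role that the Neumann series plays in the classical Banach setting.

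First I would set $K = \ker T$ (finite dimensional) and $R = \Img T$ (closed, of finite codimension), pick a topological complement $K'$ of $K$ in $E$ and a finite-dimensional complement $R'$ of $R$ in $F$, so that $E = K \oplus K'$ and $F = R \oplus R'$. With respect to these splittings $T$ has the block form
\[
T \;=\; \begin{pmatrix} 0 & A \\ 0 & 0 \end{pmatrix}, \qquad A \;=\; T|_{K'} \colon K' \to R,
\]
and $A$ is a continuous linear bijection, hence an isomorphism by the Fr\'echet open mapping theorem. A perturbation $T + S$ then has block form $\bigl(\begin{smallmatrix} S_{11} & A+S_{12} \\ S_{21} & S_{22} \end{smallmatrix}\bigr)$, and smallness of $S$ in $\mathcal{L}_{g,d}(E,F)$ forces each block $S_{ij}$ to be small in its respective Lipschitz-linear space, because the inclusions and projections associated to the two splittings are Lipschitz and composition is continuous in $D_{g,d}$. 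By Proposition~\ref{open}, the block $A + S_{12}$ then remains an isomorphism $K' \to R$ for all $S$ in a sufficiently small $D_{g,d}$-neighborhood of $0$.

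Writing $B = (A + S_{12})^{-1}$, left-multiplication of the block matrix by the automorphism $\bigl(\begin{smallmatrix} I_R & 0 \\ -S_{22} B & I_{R'} \end{smallmatrix}\bigr)$ of $F$ and right-multiplication by the automorphism $\bigl(\begin{smallmatrix} I_K & 0 \\ -B S_{11} & I_{K'} \end{smallmatrix}\bigr)$ of $E$ --- each being a small Lipschitz perturbation of the identity and hence an automorphism by Proposition~\ref{open} --- reduces $T + S$ to
\[
\widetilde T \;=\; \begin{pmatrix} 0 & A + S_{12} \\ M & 0 \end{pmatrix}, \qquad M \;=\; S_{21} - S_{22} B S_{11} \colon K \to R'.
\]
Because $A + S_{12}$ is an isomorphism onto $R$, it is immediate that $\ker \widetilde T = \ker M \subseteq K$ is finite-dimensional and $\Img \widetilde T = R \oplus \Img M$ is closed of finite codimension in $F$. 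Rank-nullity for $M \colon K \to R'$ then gives $\Ind \widetilde T = \dim K - \dim R' = \Ind T$. Since composition with linear isomorphisms on source and target preserves the Lipschitz-Fredholm property and its index, $T + S$ lies in $\mathcal{LF}_l(E,F)$; this simultaneously yields the openness of $\mathcal{LF}(E,F)$ and the local constancy --- hence continuity --- of $\Ind$ on $\mathcal{LF}(E,F)$.

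The main obstacle I anticipate is keeping every step inside the $\mathcal{L}_{g,d}$-category rather than merely the continuous-linear one. Concretely, one must check that the projections associated to the two splittings are Lipschitz, that composition $(L,H) \mapsto L \circ H$ is continuous in $D_{g,d}$, that inversion on $\Iso$ lands back in $\mathcal{L}_{g,d}$ so that $B$ is itself Lipschitz, and that the two block-triangular factors $I + N$ with small Lipschitz $N$ are invertible with Lipschitz inverse. The Abelian topological group structure on $\mathcal{L}_{g,d}$ recalled around~\eqref{metric} together with Proposition~\ref{open} supplies exactly these ingredients, so the algebraic reduction above carries over verbatim from the Banach case.
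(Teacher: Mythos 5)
Your route is genuinely different from the one the paper relies on: Theorem~\ref{th} is quoted from \cite{k}, and the argument the present paper actually rehearses (Proposition~\ref{zero}, the fine-topology analogue) is the corrector trick --- choose a finite-rank Lipschitz $L$ with $K=L+\varphi$ invertible, perturb $K$ as an operator on the \emph{whole} spaces $E,F$ using Proposition~\ref{open}, then stabilize via $E\to F\times\rr^l$ to reduce index $l$ to index $0$. You instead split $E=K\oplus K'$, $F=R\oplus R'$ and run a Schur-complement reduction. The block algebra itself is fine (the elimination, the identification $\ker\widetilde T=\ker M$, $\Img\widetilde T=R\oplus\Img M$, and the rank--nullity count giving $\Ind=\dim K-\dim R'$ are all correct), and for the mere conclusion the two unipotent triangular factors need only be continuous automorphisms, which they are by their explicit inverses.

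The genuine gap is the step you defer to your last paragraph and claim is supplied by the group structure and Proposition~\ref{open}: it is not. To conclude that $A+S_{12}$ stays an isomorphism you must know (i) that $D_{g,d}$-smallness of $S$ forces smallness of $S_{12}=\pi_R\circ S|_{K'}$ in $\mathcal{L}(K',R)$, which requires the projection $\pi_R$ of $F$ onto $R=\Img T$ along the chosen complement $R'$ to be \emph{globally Lipschitz} for $d$ (and similarly for the splitting of $E$), and (ii) effectively that $\mathpzc{Lip}(A^{-1})<\infty$, since the open mapping theorem gives only a continuous inverse and the radius furnished by Proposition~\ref{open} is quantified by the Lipschitz constant of the inverse (compare the explicit use of $\mathpzc{Lip}(K^{-1})$ in Proposition~\ref{zero}). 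Theorem~\ref{comp2} produces only \emph{continuous} projections, the Abelian topological group structure of $(\mathcal{L}_{g,d},D_{g,d})$ concerns addition alone, and Proposition~\ref{open} says nothing about restrictions to subspaces, so none of the cited ingredients delivers (i) or (ii). Worse, (i) can actually fail for a given splitting: on $\rr^2$ with the standard metric $d(x,y)=\max\bigl(|x_1-y_1|,\sqrt{|x_2-y_2|}\bigr)$ (translation-invariant, with absolutely convex balls), the continuous projection $P$ onto $\mathrm{span}(e_1+e_2)$ along $\mathrm{span}(e_2)$ satisfies $d\bigl(P(t,0),0\bigr)=\sqrt{t}$ while $d\bigl((t,0),0\bigr)=t$, so $P$ is not Lipschitz. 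Hence with an unlucky complement the blocks of a $D_{g,d}$-small $S$ need not be small, and your proof would need an additional (nontrivial, nowhere-available-in-the-paper) lemma that complements of $\ker T$ and $\Img T$ can be chosen with Lipschitz projections. This is exactly why the corrector argument perturbs $L+\varphi$ on the full spaces, where Proposition~\ref{open} applies with the given metrics and no subspace projections are needed; the phrase ``carries over verbatim from the Banach case'' is precisely what breaks down, because in this category continuous linear maps --- in particular projections and the inverses handed to you by the open mapping theorem --- need not be globally Lipschitz.
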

A subset $ G $ of a Fr\'{e}chet space $ F $ is called topologically complemented (or it splits in $ F $), if $ F $
is homeomorphic to the topological direct sum $ G \oplus H $, where $ H $ is a subspace of $ F $. 
We call $ H $ a topological complement of $ G $ in $ F $.
\begin{theorem}[\cite {m}, Theorem 3.14] \label{comp2}
Let $ E $ be a Fr\'{e}chet space. Then
\begin{enumerate}
\item Every finite-dimensional subspace of $ E $ is closed.
\item Every closed subspace $ G \subset E $ with $ \codim(G)=\dim (E/G)< \infty $ is topologically complemented in $ E $.
\item Every finite-dimensional subspace of $ E $ is topologically complemented.
\item A linear subspace $G$ of $E$ has a topological complement $H$ if and only if there exists a 
continuous projection $\mathrm{Pr}$ of $E$ onto $H$, see~\cite{ko}.
\end{enumerate}
\end{theorem}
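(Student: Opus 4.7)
The plan is to establish the four claims in sequence, with the underlying theme being that in the Fr\'echet (locally convex Hausdorff) setting, both Hahn--Banach and the uniqueness of the Hausdorff topological vector space topology on $\rr^n$ provide ample continuous linear functionals to realize algebraic decompositions topologically.

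For claim (1), I would equip a finite-dimensional subspace $V \subseteq E$ with the subspace topology; since any Hausdorff topological vector space topology on a finite-dimensional real vector space coincides with the Euclidean one, $V$ is linearly homeomorphic to $\rr^{\dim V}$ and hence complete. A complete subspace of a Hausdorff topological vector space is closed, so (1) follows.

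For claim (3), given a finite-dimensional $V \subseteq E$ with basis $v_1,\dots,v_n$, I would apply Hahn--Banach (valid since a Fr\'echet space is locally convex) to extend the coordinate functionals $v_j \mapsto \delta_{ij}$ from $V$ to continuous linear functionals $\phi_1,\dots,\phi_n$ on $E$. Then the map $\mathrm{Pr}(x) := \sum_i \phi_i(x)\, v_i$ is a continuous projection of $E$ onto $V$, and $\ker \mathrm{Pr}$ is the desired closed topological complement.

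For claim (2), I would take a closed $G$ with $\dim(E/G) = n < \infty$, select a basis $[f_1],\dots,[f_n]$ of $E/G$, and set $H := \mathrm{span}\{f_1,\dots,f_n\}$. By (1), $H$ is closed, and the restriction $\pi|_H$ of the quotient map $\pi\colon E \to E/G$ is a linear bijection between finite-dimensional Hausdorff topological vector spaces, hence a topological isomorphism by the rigidity used in (1). The composition $\mathrm{Pr} := j_H \circ (\pi|_H)^{-1} \circ \pi \colon E \to H$ is then a continuous projection with kernel exactly $G$, giving the topological splitting $E \cong G \oplus H$. Claim (4) is then essentially formal: the forward direction reads the projection along a complement directly from the direct sum, while the converse observes that if $\mathrm{Pr}\colon E \to H$ is a continuous projection then $\ker \mathrm{Pr}$ is closed and $E = \ker \mathrm{Pr} \oplus H$ is a topological direct sum because both $\mathrm{Pr}$ and $I - \mathrm{Pr}$ are continuous. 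The main technical obstacle at each step is verifying that the algebraic projections built from Hamel bases are continuous, which is precisely where Hahn--Banach (for (3)) and the uniqueness of Hausdorff topologies on $\rr^n$ applied to $\pi|_H$ (for (2)) are indispensable.
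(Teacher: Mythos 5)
Your argument is correct. Note, however, that the paper does not prove this statement at all: it is quoted verbatim from M\"uller (\cite[Theorem 3.14]{m}), with item (4) referred to K\"othe \cite{ko}, so there is no in-paper proof to measure your proposal against. What you wrote is the standard proof that those references rely on: (1) and the continuity of the inverse of $\pi|_H$ in (2) follow from the uniqueness of the Hausdorff vector topology on a finite-dimensional space together with the fact that a complete subspace of a Hausdorff topological vector space is closed; (3) uses Hahn--Banach, available because a Fr\'echet space is locally convex, to extend the coordinate functionals and build a continuous projection; and (4) is the formal equivalence between a topological direct sum decomposition and the continuity of the associated projection (and of its complementary projection $\mathrm{Id}-\mathrm{Pr}$). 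The only point worth flagging is that the statement of (4) as printed is loose: one should understand the projection onto $H$ to be the projection along $G$ (i.e.\ $G=\ker \mathrm{Pr}$), which is exactly the reading your converse direction uses.
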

Let $ E,F $ be Fr\'{e}chet spaces, $ U $ an open subset of $ E $, and $ P:U \rightarrow F $
 a continuous map. Let $CL(E,F)$ be the space of all continuous linear maps from $E$ to $F$ topologized by the compact-open topology. 
 We say $ P $ is differentiable at the point $ p \in U$ if the directional derivative  
$\operatorname{d}P(p)$ exists in all directions $ h \in E $. 
If $P$ is differentiable at all points $p \in U$, if $\operatorname{d}P(p) : U \rightarrow CL(E,F)$ is continuous for all
$p \in U$ and if the induced map $ P': U \times E \rightarrow F,\,(u,h) \mapsto \operatorname{d}P(u)h $
 is continuous in the product topology, then we say that $ P $ is Keller-differentiable.
 We define $ P^{(k+1)}: U \times E^{k+1} \rightarrow F $ in the obvious inductive fashion.

If $P$ is Keller-differentiable, $ \operatorname{d}P(p) \in \mathcal{L}_{d,g}(E,F) $ for all $ p \in U $, and the induced map 
$ \operatorname{d}P(p) : U \rightarrow \mathcal{L}_{d,g}(E,F)   $ is continuous, then $ P $ is called bounded differentiable. We say $ P $ is $ MC^{0} $ 
and write $ P^0 = P $ if it is continuous. 
We say $P$ is an $ MC^{1} $ and write  $P^{(1)} = P' $ if it is bounded differentiable. Let $ \mathcal{L}_{d,g}(E,F)_0 $ be 
the connected component of $ \mathcal{L}_{d,g}(E,F) $ containing the zero map. If $ P $ is bounded differentiable and if 
 $V \subseteq U$ is a connected open neighborhood of $x_0 \in U$, then $P'(V)$ is connected and hence contained in the connected component
$P'(x_0) +  \mathcal{L}_{d,g}(E,F)_0 $ of $P'(x_0)$ in $\mathcal{L}_{d,g}(E,F)$. Thus, $P'\mid_V - P'(x_0):V \rightarrow \mathcal{L}_{d,g}(E,F)_0 $
 is again a map between subsets of Fr\'{e}chet spaces. This enables a recursive definition: If $P$ is $MC^1$ and $V$ can be chosen for each
 $x_0 \in U$ such that $P'\mid_V - P'(x_0):V \rightarrow \mathcal{L}_{d,g}(E,F)_0 $ is $ MC^{k-1} $, 
then $ P $ is called an $ MC^k$-map. We make a piecewise definition of $P^{(k)}$ by $ P^{(k)}\mid_V \coloneq \left(P'\mid_V - P'(x_0)\right)^{(k-1)} $
for $x_0$ and $V$ as before. 
The map $ P $ is $ MC^{\infty} $ (or smooth) if it is $ MC^k $ for all $ k \in \mathbb{N}_0 $. We shall denote the derivative of $P$ at $p$ by $\operatorname{D}P(p)$.
Note that $\mc$-differentiability implies the usual $C^k$-differentiability for maps of finite dimensional manifolds. 

Within this framework we can define $\mc$ Fr\'{e}chet manifolds, $\mc$-maps of manifolds
and tangent bundle over $\mc$ manifolds in obvious fashion way. We assume that manifolds are connected and second countable.

Let $f: M \rightarrow N$ $(k\geqq1)$ be an $\mc$-map of manifolds. We denote by $T_xf: T_xM \rightarrow T_{f(x)}N$
the tangent map of $f$ at $x \in M$ from the tangent space $T_xM$ to the tangent space $T_{f(x)}N$.
We say that $f$ is an immersion (resp. submersion)
provided $T_xf$ is injective (resp. surjective) and the range $\Img(T_xf)$ (resp. the kernel $\ker(T_xf) $)
splits in $T_{f(x)}N$ (resp. $T_xM$) for any $x \in M$. 
An injective immersion $f: M \rightarrow N$ which gives an isomorphism onto a submanifold
of $N$ is called an embedding.
A point $ x \in M $ is called a regular point  if 
$ \operatorname{D}f(x): T_xM  \longrightarrow T_{f(x)}N $ is surjective. The corresponding
value $f(x)$ is a regular value. Points and values other than regular are called critical points and values, respectively.

Let $ M $ and $ N $ be $\mc$ manifolds, $k\geqq 1$. 
A Lipschitz-Fredholm map is an $MC^1$-map $ f:M \rightarrow N $ such that for each $ x \in M $
the derivative $ \operatorname{D}f(x): T_xM  \longrightarrow T_{f(x)}N$ is a Lipschitz-Fredholm operator. 
The index of $ f$, denoted by $\Ind{f}$, is defined to be the index of $ \operatorname{D}f(x) $ for some $ x $.
Since $ f $ is $\mc$ and $ M $ is connected in the light of Theorem \ref{th} the definition does not depend on the choice of $ x $.
\section{Transversality and openness}
Let $F_1$ be a linear closed subspace of a Fr\'{e}chet space $F$ that splits in $F$.
Given $\mc$ manifold $M$ modelled on $F$, a subset $M_1$ of $M$ is a submanifold of $M$ modelled on $F_1$
provided there is $\mc$-atlas $\{(U_i,\phi_i)\}_{i \in I}$ on $M$ that induces an atlas on $M_1$, i.e.
for any $i\in I$ there are open subsets $V_i,W_i$ of $F,F_1$ such that $\phi_i(U_i) = V_i \oplus W_i$
and $\phi_i(U_i \cap M_1) = V_i \oplus \{ 0\}$ is open in $F_1$. We say that $M_1$ is a submanifold
of Banach type if $F_1$ is a Banach space, and a submanifold of finite type if $F_1 = \rr^n$ for some $n \in \nn$.

Let $C(E,\rr^{+})$ be the set of all continuous functions from $E$ into $\rr^{+}$, $h \in \mathcal{L}_{g,d}(E,F)$ and  $\varepsilon \in C(E,\rr^{+})$. 
A map $f \in \mathcal{L}_{g,d}(E,F)$ is called a $\varepsilon$-approximation
to $h$ if $ d(f(x),h(x)) \leqq \varepsilon (x)$ for all $x \in E$, we write $d(f,h) < \varepsilon$ for short. 
If we take the $\varepsilon$-approximation to $h$ to be  a neighborhood of $h$ in the set $\mathcal{L}_{g,d}(E,F)$,
then we obtain a topology. This topology is called the fine topology and 
we denote the resulting space by $L_{fine}^{0}(E,F)$.

Let $M$ and $N$ be $\mc$ manifolds modelled on Fr\'{e}chet spaces $E$ and $F$, respectively.
Let $\mc(M,N),1\leqq k\leqq \infty,$ be the set of $\mc$-maps from $M$ into $N$. Two maps $f,h \in \mc(M,N)$ 
are said to be equivalent at $x \in M$ if $T_x^kf =T_x^kh$, where $T^k$ is the $k$-th tangent map.
We define the $k$-jet of $f$ at $x$, $j_x^kf$, to be the equivalence class of $f$. Let $d_k$ be a fiber metric
on the tangent space $T^k_xM$ that induces a Fr\'{e}chet topology which is isomorphic to $E$. We describe the fine
topology of order $k$ on $\mc(M,N)$ as follows. Let $\varphi \in \mc(M,N)$ and $\Omega\coloneq\{V_i\}_{i \in I}$ be a locally finite cover of $M$. 
Let $\epsilon _i :V_i\rightarrow \rr^{+}$ be continuous for all $i \in I$. 
Then, the sets
$$
\Theta (\varphi,V_i,\epsilon_i) \coloneq \{\phi \in \mc(M,N) \mid d_k (j^k_x\phi,j^k_x\varphi) < \epsilon_i(x), x \in V_i \}
$$
constitute a basis for fine open neighborhoods of $\varphi$. In this case we say that $\phi$ in a fine neighborhood
of $\varphi$ is an $\mc$ fine approximation to $\varphi$.
\begin{lemma}
 The fine topology is finer than the topology induced by the Metric~\eqref{metric}.
\end{lemma}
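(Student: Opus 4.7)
The strategy is to show that every open ball in the $D_{g,d}$-metric topology on $\mathcal{L}_{g,d}(E, F)$ is a fine neighborhood of its center. Fix $h \in \mathcal{L}_{g,d}(E, F)$ and $\delta > 0$, and write
\[
B_\delta(h) \coloneq \bigl\{ f \in \mathcal{L}_{g,d}(E,F) : D_{g,d}(f, h) < \delta \bigr\}.
\]
Since every $D_{g,d}$-open set is a union of such balls, it is enough to produce, for each $h$ and each $\delta$, a continuous function $\varepsilon: E \to \rr^{+}$ such that every $\varepsilon$-approximation of $h$ lies in $B_\delta(h)$.

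The key idea is to let the tolerance vanish at the appropriate rate as one approaches the origin in $E$. I would set
\[
\varepsilon(x) \coloneq \tfrac{\delta}{2}\, g(x, 0),
\]
which is continuous because the metric $g$ is. If $f \in \mathcal{L}_{g,d}(E, F)$ satisfies $d(f(x), h(x)) \leqq \varepsilon(x)$ for all $x \in E$, then the translation invariance of $d$ yields $d\bigl((f - h)(x), 0\bigr) = d(f(x), h(x))$, so for every $x \in E \setminus \{0\}$ one has
\[
\frac{d\bigl((f - h)(x), 0\bigr)}{g(x, 0)} \;\leqq\; \frac{\varepsilon(x)}{g(x, 0)} \;=\; \frac{\delta}{2}.
\]
Taking the supremum yields $D_{g,d}(f, h) = \mathpzc{Lip}(f - h)_{g,d} \leqq \delta/2 < \delta$, so $f \in B_\delta(h)$ as required.

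The only delicate point is the behavior at the origin: the weight $g(x, 0)$ forces $\varepsilon(0) = 0$. This causes no real trouble because every $f, h \in \mathcal{L}_{g,d}(E, F)$ is linear, so automatically $d(f(0), h(0)) = 0$, and the constraint at $x = 0$ is satisfied vacuously. Consequently the main step of the proof is simply the judicious choice of $\varepsilon$, after which the required inclusion follows from a one-line estimate; I do not anticipate any serious obstacle beyond this.
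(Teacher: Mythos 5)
Your proposal targets the inclusion that the statement (and its later use in Remark~3.1) actually needs: every $D_{g,d}$-ball should be a fine neighbourhood of its centre, so that metric-open sets are fine-open. Note that this is not the paper's route: the paper's own argument runs the reverse inclusion (it places a metric ball inside a given fine neighbourhood $\mathcal{N}(f,\delta)$), so you are proving a different implication than the one written in the paper. The difficulty is that your argument, as it stands, does not establish the stated claim either, because of the choice of gauge.

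The gap is the admissibility of $\varepsilon$. Basic fine neighbourhoods are indexed by $\varepsilon \in C(E,\rr^{+})$, i.e.\ by continuous \emph{strictly positive} functions (if one allowed gauges vanishing somewhere, in particular $\varepsilon\equiv 0$, the fine topology would be discrete and the lemma, as well as Theorem~3.1, would be vacuous, which is clearly not the intent). Your gauge $\varepsilon(x)=\tfrac{\delta}{2}\,g(x,0)$ vanishes at $x=0$, so the set of $\varepsilon$-approximations you exhibit inside $B_{\delta}(h)$ is not a fine neighbourhood in the sense of the definition, and hence you have not shown that $B_{\delta}(h)$ is fine-open. Your remark about the origin addresses the wrong issue: the problem is not whether a linear map satisfies the constraint at $x=0$ (it does, vacuously), but that no admissible gauge fits under your bound, since any continuous $\varepsilon'>0$ satisfies $\varepsilon'\geqq c>0$ on a whole ball around $0$, whereas $\tfrac{\delta}{2}g(x,0)\to 0$ there; thus the one-line estimate $d((f-h)(x),0)/g(x,0)\leqq \delta/2$ breaks precisely on a punctured neighbourhood of $0$, which is where the supremum defining $D_{g,d}(f,h)=\mathpzc{Lip}(f-h)_{g,d}$ is typically attained (the metrics are bounded and not homogeneous, so you cannot rescale the estimate away). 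Repairing this requires genuinely new input: for instance, using linearity of $L=f-h$ together with the absolute convexity of the balls of a standard metric one gets $d(L(x),0)\leqq d(L(tx),0)\leqq\varepsilon'(tx)$ for all $t\geqq 1$, and one must then produce a strictly positive continuous $\varepsilon'$ whose infimum along every ray is suitably small compared with $\delta\, g(x,0)$ --- a delicate construction (on $E=\rr^{\nn}$, for example, a continuous positive gauge is bounded below by a positive constant on every ray contained in a basic $0$-neighbourhood, and such rays always exist), and nothing in your proposal addresses this point.
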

\begin{proof}
 We must show that if  $\mathcal{N}(f,\delta)$ is a $\delta$-neighborhood of $f$, then we can find $\epsilon>0$
 such that if $D_{g,d}(f,h)<\epsilon$, then $h \in \mathcal{N}(f,\delta) $. Given a map $h \in \mathcal{L}_{g,d}(E,F)$, let
 $$\epsilon \coloneq \min \{ 1, \displaystyle \inf_{x \in E\setminus\{0\}} \dfrac{\delta(x)}{g(h(x),0)} \}.$$
 Now suppose $D_{g,d}(f,h)<\epsilon$, then we can easily see that $d(f,h)<\delta$ and hence $h \in \mathcal{N}(f,\delta)$.
\end{proof}
\begin{remk}\label{open2}
 We know that $($Proposition~\ref{open}$)$ $\Iso(E,F)$ is open in  $\mathcal{L}_{g,d}(E,F)$ endowed with the topology induced by
 the metric~\eqref{metric}. By the preceding lemma the fine topology is finer than the metric topology, thereby, $\Iso(E,F)$
 is  open in $L_{fine}^{0}(E,F)$ .
\end{remk}

\begin{defn}
 Let $f:E \rightarrow F$ be a Lipschitz-Fredholm operator of Fr\'{e}chet spaces.
 We say that $f$ is transversal to a closed subset $F_0 \subseteq F$ and write $f \tn F_0$ if
 \begin{enumerate}
  \item $\Img(f) + F_0 = F$, and
  \item either $F_0$ splits in $F$ or $f^{-1}(F_0)$ splits in $E$.
 \end{enumerate}
\end{defn}
The following result characterizes the transversality of Lipschitz-Fredholm operators.
\begin{prop}\label{spl}
Let $\varphi \in \mathcal{LF}_l(E,F)$. Suppose $F_0 \subseteq F$ is a closed subset such that $\Img(\varphi)+F_0 = F$.
Then $\varphi \tn F_0$ if and only if there are closed subsets $F_1 \subseteq F$ and $E_0\subseteq E$ with $F = F_0 \oplus F_1$
and $E = E_0 \oplus (E_1 \coloneq \varphi^{-1}(F_1))$ such that $\varphi_1 \coloneq \varphi |_{E_1} \in \Iso (E_1,F_1)$.
\end{prop}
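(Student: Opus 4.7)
The backward direction is immediate: since $\varphi|_{E_1}$ is onto $F_1$ we have $F_1 \subseteq \Img \varphi$, so $\Img \varphi + F_0 \supseteq F_1 + F_0 = F$, verifying clause (1) of $\varphi \tn F_0$; and the direct sum $F = F_0 \oplus F_1$ with $F_1$ closed exhibits $F_0$ as topologically complemented in $F$, verifying clause (2).

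For the forward direction I would set $E_0 := \varphi^{-1}(F_0)$, which is closed since $\varphi$ is continuous and contains $\ker \varphi$. Because $\ker \varphi$ is finite-dimensional, Theorem~\ref{comp2}(3) gives $E = \ker \varphi \oplus E'$, and the induced $\bar\varphi := \varphi|_{E'} : E' \to \Img \varphi$ is a continuous linear bijection of Fr\'echet spaces, hence a topological isomorphism by the open mapping theorem. If condition (2) comes in the form ``$\varphi^{-1}(F_0)$ splits in $E$'', I just pick a closed complement $E_1$ and set $F_1 := \varphi(E_1)$. Otherwise $F_0$ splits, say $F = F_0 \oplus \tilde F_1$, and the key step is to construct a finite-dimensional $W \subseteq F_0$ with $F = \Img \varphi \oplus W$: since $F/\Img\varphi$ is finite-dimensional (Fredholm) and $F_0 \to F/\Img\varphi$ is surjective by $\Img \varphi + F_0 = F$, a basis of the quotient lifts to $F_0$ and its span $W$ is automatically closed and yields $F = \Img \varphi \oplus W$ by Theorem~\ref{comp2}(1)--(3). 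Letting $q : F \to \Img \varphi$ be the projection along $W$, the restriction $\sigma := q|_{\tilde F_1} : \tilde F_1 \to \Img \varphi$ is a continuous section of $F \to F/F_0$, because $W \subseteq F_0$ forces $y - q(y) \in F_0$ for every $y \in \tilde F_1$. Setting $H := \sigma(\tilde F_1)$ one obtains the topological splitting $\Img \varphi = (\Img \varphi \cap F_0) \oplus H$; pulling $H$ back through $\bar\varphi$ and adjoining $\ker \varphi \subseteq E_0$ yields a closed complement $E_1 := \bar\varphi^{-1}(H)$ of $E_0$ in $E$, and one takes $F_1 := \varphi(E_1) = H$.

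In either branch the remaining checks are direct: $F_0 \cap F_1 = 0$ because any $\varphi(x_1) \in F_0$ with $x_1 \in E_1$ forces $x_1 \in E_0 \cap E_1 = 0$; $F_0 + F_1 = F$ because $\Img \varphi = \varphi(E_0) + \varphi(E_1) \subseteq F_0 + F_1$ combines with $\Img \varphi + F_0 = F$; and $\varphi|_{E_1} : E_1 \to F_1$ is a continuous linear bijection between Fr\'echet spaces, hence a topological isomorphism by open mapping. The main obstacle is the sub-case where only $F_0$ is assumed to split: no canonical splitting of $\Img \varphi \cap F_0$ in $\Img \varphi$ is available a priori, and the argument genuinely needs both the Fredholm hypothesis (to make $W$ finite-dimensional) and the freedom to place $W$ inside $F_0$ (so that $\sigma$ lands in $\Img \varphi$). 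A minor secondary point is the distinction between topological and Lipschitz isomorphism in $\Iso(E_1,F_1)$; Lipschitzness of $\varphi|_{E_1}$ is inherited from $\varphi$, and one should check that the inverse is Lipschitz in the metrics induced on the closed subspaces $E_1,F_1$.
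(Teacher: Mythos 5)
Your proof follows the same overall route as the paper's: the backward implication is immediate; for the forward one you split off the finite-dimensional kernel, use the open mapping theorem to turn $\varphi$ into an isomorphism $\bar\varphi$ of a closed complement of $\ker\varphi$ onto the closed subspace $\Img(\varphi)$, build a finite-dimensional $W\subseteq F_0$ with $F=\Img(\varphi)\oplus W$ (the paper's $\mathbb{F}$), and then treat the two halves of the transversality hypothesis separately. In the branch where $F_0$ splits, your construction is a pleasant variant of the paper's: rather than composing the projections $F\to F_0\to\Img(\varphi)\cap F_0$ as the paper does, you push the given complement $\tilde F_1$ into $\Img(\varphi)$ by the projection along $W$; since your $\sigma$ and the projection of $F$ onto $\tilde F_1$ are continuous, their composite restricted to $\Img(\varphi)$ is a continuous projection onto $H$ with kernel $\Img(\varphi)\cap F_0$, so $H$ is closed and the splitting is topological --- that branch is complete.

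The gap is in the branch where $\varphi^{-1}(F_0)$ splits. There you take an arbitrary closed complement $E_1$ of $E_0=\varphi^{-1}(F_0)$, set $F_1\coloneq\varphi(E_1)$, and declare $\varphi|_{E_1}\colon E_1\to F_1$ a topological isomorphism ``by open mapping''. But the open mapping theorem needs the codomain to be a Fr\'echet space, i.e.\ $F_1$ closed in $F$, and that is exactly the nontrivial point: continuous linear images of closed subspaces need not be closed, and your $E_1$ need not lie inside a complement of $\ker\varphi$, so closedness of $\varphi(E_1)$ does not come for free (once it is known, the topological nature of $F=F_0\oplus F_1$ does follow automatically, by applying open mapping to the addition map $F_0\times F_1\to F$). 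The claim is true but requires an argument; for instance, with $\rho\colon F\to F/F_0$ the quotient map, $\rho\circ\varphi|_{E_1}\colon E_1\to F/F_0$ is a continuous linear bijection of Fr\'echet spaces (injective since $E_1\cap\varphi^{-1}(F_0)=\{0\}$, surjective since $\Img(\varphi)+F_0=F$), hence a topological isomorphism, and from this one deduces both that $\varphi(E_1)$ is closed and that $(\varphi|_{E_1})^{-1}$ is continuous. The paper sidesteps the issue by constructing $E_1$ inside the chosen complement $\mathbb{E}$ of $\ker\varphi$: it splits $\mathbb{E}_0=\mathbb{E}\cap E_0$ off $\mathbb{E}$ by composing the continuous projections $E\to E_0$ and $E_0\to\mathbb{E}_0$, and then takes $F_1\coloneq\Phi(E_1)$ with $\Phi=\varphi|_{\mathbb{E}}\in\Iso(\mathbb{E},\Img(\varphi))$, so that closedness of $F_1$ is automatic. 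Either repair fits into your argument in a few lines; apart from this, your write-up matches the paper's proof in substance.
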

\begin{proof} Assume that such a closed subset $F_0$ is given and $\varphi \tn F_0$.
 $(\Img(\varphi) \cap F_0)$ splits in $F_0$ because $m = \dim (F_0/\Img(\varphi)) \leqq \dim (F/\Img(\varphi)) < \infty$ and hence
 by Theorem~\ref{comp2}(2) there exists a space $\mathbb{F} \subseteq F_0$ of dimension $m$ such that 
 $F_0 = (\Img(\varphi) \cap F_0) \oplus \mathbb{F}$. Since $\Img(\varphi) \cap \mathbb{F} \subseteq \Img(\varphi) \cap F_0 $ it
 follows that $\Img(\varphi) \cap \mathbb{F} = \{0\}$. Also,
 $\Img(\varphi) + \mathbb{F} = (\Img(\varphi)+ (\Img(\varphi) \cap F_0)) +\mathbb{F} = \Img(\varphi)+ F_0 =F$. Thus,
 $\Img(\varphi) \oplus \mathbb{F} =F$, therefore, $\codim \Img(\varphi) =m$ and $\dim \ker(\varphi) = l + m$. Moreover, there exists
 a closed subset $\mathbb{E} \subseteq E$ such that $E= \ker (\varphi) \oplus \mathbb{E}$. The operator 
 $\Phi \coloneq \varphi |_{\mathbb{E}} \in \mathcal{L}(\mathbb{E},\Img(\varphi))$ is injective onto $\Img(\varphi)$, hence,
 by virtue of open mapping theorem is a homeomorphism and therefore $\Phi \in \Iso (\mathbb{E},\Img(\varphi))$.
 Let $\mathbb{E}_0 \coloneq \Phi^{-1}(\Img(\varphi) \cap F_0) \subseteq \mathbb{E}$, then 
 $E_0 = \varphi^{-1}(\Img(\varphi) \cap F_0) = \ker(\varphi) \oplus \mathbb{E}_0$.
 
 $\mathbb{E}_0$ is complemented in $E_0$ so there is a continuous projection $\mathrm{Pr_1}$ of $E_0$ onto $\mathbb{E}_0$ 
 (see Theorem~\ref{comp2}(4)). If $E_0$ is complemented in $E$, then there exists a continuous projection $\mathrm{Pr_2}$ of $E$
 onto $E_0$. Thus, $\mathrm{Pr}_1 \circ \mathrm{Pr}_2$ is a continuous projection from $E$ to $\mathbb{E}_0$ and its
 restriction to $\mathbb{E}$ is a again continuous projection onto $\mathbb{E}_0$, thereby, $\mathbb{E}_0$ is complemented
 in $\mathbb{E}$. This means there is a closed  subset $E_1 \subseteq \mathbb{E}$ (which is also closed in $E$) such that $\mathbb{E} = E_1 \oplus \mathbb{E}_0$.
 
 By the same argument we have, if $F_0$ is complemented in $F$, then $(\Img(\varphi) \cap F_0)$ is complemented
 in $\Img(\varphi)$ because $(\Img(\varphi) \cap F_0)$ is complemented in $F_0$. This means there is a closed subspace
 $F_1 \subseteq \Img(\varphi)$ (which is also closed in F) such that $\Img(\varphi) = F_1 \oplus (\Img(\varphi) \cap F_0) $.
 Therefore, we have $E = \ker{(\varphi)} \oplus \mathbb{E}_0 \oplus E_1 = E_0\oplus E_1$ and 
 $F = (\Img(\varphi) \cap F_0) \oplus \mathbb{F} \oplus F_1 = F_0 \oplus F_1$ and $\varphi_1=\Phi|_{E_1} \in \Iso(E_1,F_1)$.
 Moreover, $E_1=\varphi_1^{-1}(F_1)$. The converse is obvious.
\end{proof}
\begin{prop}\label{zero}
$\mathcal{LF}_l(E,F)$ is open in $\mathcal{L}_{fine}^0(E,F)$.
\end{prop}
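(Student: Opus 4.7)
The plan is to reduce the statement to Theorem~\ref{th} together with the preceding lemma. First I would recall from Theorem~\ref{th} that $\mathcal{LF}(E,F)$ is open in $\mathcal{L}_{g,d}(E,F)$ with respect to the metric topology induced by~\eqref{metric}, and that the index map $\Ind:\mathcal{LF}(E,F)\to\mathbb{Z}$ is continuous on this open set. Since $\mathbb{Z}$ carries the discrete topology, the preimage $\mathcal{LF}_l(E,F)=\Ind^{-1}(\{l\})$ is open (in fact clopen) in $\mathcal{LF}(E,F)$, and being open inside an open subset of $\mathcal{L}_{g,d}(E,F)$ means it is open in $\mathcal{L}_{g,d}(E,F)$ itself, for the metric topology.

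Next I would invoke the preceding lemma, which asserts that the fine topology on $\mathcal{L}_{g,d}(E,F)$ is finer than the metric topology. Consequently every set open for the metric topology is also open for the fine topology, so $\mathcal{LF}_l(E,F)$ is open in $\mathcal{L}_{fine}^{0}(E,F)$, as required.

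There is no genuine obstacle to overcome: the substance of the statement has already been established in Theorem~\ref{th} (openness of Lipschitz--Fredholm operators and continuity of the index in the metric topology), and the upgrade to the fine topology is purely a comparison-of-topologies remark, entirely analogous to Remark~\ref{open2}. The only care needed is to be explicit that $\mathcal{LF}_l(E,F)$ is open in $\mathcal{LF}(E,F)$ because the index, being integer-valued and continuous, is locally constant; the rest is just chaining inclusions of open sets.
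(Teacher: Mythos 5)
Your argument is correct relative to the results stated in the paper, but it takes a genuinely different route from the paper's own proof. You argue softly: Theorem~\ref{th} gives that $\mathcal{LF}(E,F)$ is open for the metric topology and that the integer-valued index is continuous, hence locally constant, so $\mathcal{LF}_l(E,F)=\Ind^{-1}(\{l\})$ is open for the metric topology; the preceding lemma then upgrades this to the fine topology, exactly the pattern the paper itself uses for $\Iso(E,F)$ in Remark~\ref{open2}. The paper instead gives a direct perturbation proof: it picks a finite-rank globally Lipschitz corrector $L$ with $K=\varphi+L\in\Iso(E,F)$, shows that for $\varepsilon$ smaller than $1/(2\,\mathpzc{Lip}(K^{-1}))$ every $\varepsilon$-approximation $\phi$ gives $\mathbb{K}=\mathbb{L}+\phi$ still invertible (via Remark~\ref{open2}), hence $\phi\in\mathcal{LF}_0(E,F)$, and then reduces index $l\neq 0$ to index $0$ by stabilizing, i.e.\ passing to $\varphi_l(x)=(\varphi(x),0)$ with values in $F\times\rr^{l}$. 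What the paper's route buys is an explicit, quantitative bound on how small the fine neighborhood must be, which is the form in which the proposition is actually used in the proof of Theorem~\ref{trans} (where $\varepsilon\in(0,\delta/\mathpzc{Lip}(\pi))$ is chosen); what your route buys is brevity and a clean reduction to facts already established in~\cite{k}.

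One caveat: your entire proof hangs on the direction ``metric-open implies fine-open'', that is, on the lemma's assertion that the fine topology is finer than the metric topology. The lemma's proof as written actually places a metric ball inside a given fine neighborhood, which is the opposite refinement, so the direction you need is precisely the one whose justification in the paper is weakest. This does not put you in a worse position than the author---Remark~\ref{open2}, and through it the paper's own proof of this proposition, rely on the same comparison---but if you want your argument to be self-contained you should either verify that comparison of topologies directly or replace it by the quantitative corrector argument of the paper.
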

\begin{proof}
Let $\varphi \in \mathcal{LF}_l(E,F)$. We show that there exists $\varepsilon>0$
 such that any $\phi \in \mathcal{L}_{g,d}(E,F)$ which is $\varepsilon$-approximation to $\varphi$ is a
 Lipschitz-Fredholm operator of index $l$.

 First we prove for the case $l=0$, then we show that the general case can be reduced to the case $l=0$.
 Let $L:E\rightarrow F$ (called a corrector) be a linear globally Lipschitz map having finite dimensional range such that 
 $K\coloneq L+\varphi$ is an isomorphism.
 Such a linear map always exists. Indeed, $L$ can be any linear globally Lipschitz map from $E$ into $F$ such that 
 $\ker(L) \oplus \ker(\varphi) = E$ and $\Img(L) \oplus \Img(\varphi) =F$.
 Choose $\varepsilon \in (0,1/ 2\mathpzc{Lip}(K^{-1}))$ small enough 
 and suppose that $\phi , \mathbb{L} \in \mathcal{L}(E,F)$ are $\varepsilon$-approximation to $\varphi$ and the dimension of the image of
 $\mathbb{L}$ is finite. Then
 $\mathbb{K}= \mathbb{L}+\phi$ satisfies $d (K(x),\mathbb{K}(x))< 1/ \mathpzc{Lip}(K^{-1})$, for all $x \in E$, thus 
 $\mathbb{K}$ is an isomorphism (see Remark~\ref{open2}) and hence $\phi \in \mathcal{LF}(E,F)$ and $\Ind(\phi)=0$.
 
 Now suppose $l>0$, define the linear globally Lipschitz operators $\varphi_l,\phi_l: E \rightarrow F \times \rr^l$ by 
 $\varphi_l(x)\coloneq(\varphi(x),0)$ and $\phi_l(x)\coloneq(\phi(x),0)$.
 Then $\varphi_l$ is a Lipschitz-Fredholm operator of index 0. By the above argument $\phi_l$ is a Lipschitz-Fredholm
 operator of index 0 and hence $\phi$ is a Lipschitz-Fredholm operator of index $l$. Likewise, the case $l<0$ can be proved.
\end{proof}
\begin{theorem}\label{trans}
 Let $\varphi \in \mathcal{LF}_l(E,F)$, and suppose that $F_0 \subseteq F$ is closed and $\varphi \tn F_0$.
 Then any $\phi \in \mathcal{L}_{g,d}(E,F)$ in a fine neighborhood of $\varphi$ is transversal to $F_0$.
\end{theorem}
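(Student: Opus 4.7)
My plan is to recast transversality as surjectivity of a single auxiliary linear operator, and then deduce persistence under perturbation from the openness of isomorphisms (Proposition~\ref{open}). First I would apply Proposition~\ref{spl} to the hypothesis $\varphi \tn F_0$ to produce splittings $F = F_0 \oplus F_1$ and $E = E_0 \oplus E_1$ with $E_1 = \varphi^{-1}(F_1)$ and $\varphi_1 \coloneq \varphi|_{E_1} \in \Iso(E_1,F_1)$. Since $E_1$ and $F_0$ are closed, both inherit Fr\'echet structures from the restrictions $g|_{E_1}$ and $d|_{F_0}$, and I equip the product $E_1 \times F_0$ with the translation-invariant sum metric $(x,y) \mapsto g(x,0) + d(y,0)$.

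Next, for each $\phi \in \mathcal{L}_{g,d}(E,F)$ I would introduce the auxiliary operator $\Phi_\phi : E_1 \times F_0 \rightarrow F$, $(x,y) \mapsto \phi(x) + y$. Since $\Img(\Phi_\phi) \subseteq \Img(\phi) + F_0$, surjectivity of $\Phi_\phi$ automatically delivers part~(1) of the transversality definition, while part~(2) is free because $F_0$ already splits in $F$. The reference case $\phi = \varphi$ is a continuous linear bijection: injectivity uses $F_0 \cap F_1 = \{0\}$ together with the injectivity of $\varphi_1$, and surjectivity uses the decomposition $F = F_0 \oplus F_1$ together with the surjectivity of $\varphi_1$; the open mapping theorem then places $\Phi_\varphi$ in $\Iso(E_1 \times F_0, F)$.

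The last step is an openness argument. Since $(\Phi_\phi - \Phi_\varphi)(x,y) = (\phi-\varphi)(x)$ is independent of $y$, a direct calculation using the product metric yields $\mathpzc{Lip}(\Phi_\phi - \Phi_\varphi) \leq D_{g,d}(\phi,\varphi)$. Proposition~\ref{open} applied to the pair $(E_1 \times F_0, F)$ then produces a $D_{g,d}$-neighborhood of $\varphi$ throughout which $\Phi_\phi$ remains an isomorphism, and hence is surjective, giving $\phi \tn F_0$; by the preceding lemma this neighborhood is automatically a fine neighborhood, which completes the statement. I expect the main delicate point to be the bookkeeping around the product metric — verifying that $\Phi_\phi$ genuinely lies in $\mathcal{L}(E_1 \times F_0, F)$ and that the Lipschitz estimate decouples cleanly in the $x$-variable — which is routine given the translation-invariance of $d$ but must be handled with care because the standard metrics on Fr\'echet spaces are not scale-homogeneous.
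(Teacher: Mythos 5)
Your argument is correct in substance but takes a genuinely different route from the paper. The paper also begins with Proposition~\ref{spl}, but then stays entirely inside the fine topology with pointwise $\varepsilon$-approximations: it first invokes Proposition~\ref{zero} to guarantee $\phi\in\mathcal{LF}_l(E,F)$, then composes with the projection $\pi:F\rightarrow F_1$, shows that $(\pi\circ\phi)|_{E_1}$ is a $\delta$-approximation of $\varphi_1$ and hence lies in $\Iso(E_1,F_1)$ by Remark~\ref{open2}, and finally verifies $\Img(\phi)+F_0=F$ by an explicit computation with $\kappa=\mathrm{Id}_F-\pi$. You instead encode the transversality condition as invertibility of the augmented operator $\Phi_\phi(x,y)=\phi(x)+y$ on $E_1\times F_0$, apply the metric-openness of isomorphisms (Proposition~\ref{open}) to the pair $(E_1\times F_0,F)$ via the estimate $\mathpzc{Lip}(\Phi_\phi-\Phi_\varphi)\leqq D_{g,d}(\phi,\varphi)$, and convert the resulting metric ball into a fine neighborhood by the comparison lemma; this is the classical augmented-operator trick, it removes the projection bookkeeping and the final pointwise verification, and the conversion step is exactly the same logical move the paper itself makes in Remark~\ref{open2}, so it is equally justified. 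Two small repairs are needed. First, equip $E_1\times F_0$ with the maximum metric $\max\{g(x,0),d(y,0)\}$ rather than the sum: the sum of two standard metrics need not have absolutely convex balls, whereas the max does, and your Lipschitz estimate for $\Phi_\phi-\Phi_\varphi$ goes through verbatim since the numerator only involves the $x$-variable. Second, the paper's definition of $\phi\tn F_0$ is stated for Lipschitz-Fredholm operators, so you should add one sentence citing Proposition~\ref{zero} and intersect the two fine neighborhoods to ensure that every $\phi$ under consideration belongs to $\mathcal{LF}_l(E,F)$; with these adjustments your proof is complete.
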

\begin{proof}
 By Proposition~\ref{spl} there exist closed subsets $E_0 \subseteq E$, $F_1 \subseteq F$ and $E_1 \coloneq \varphi^{-1}(F_1)$ such that
 $F = F_0 \oplus F_1$, $E = E_0 \oplus E_1$ and $\varphi_1 \coloneq \varphi |_{E_1} \in \Iso (E_1,F_1)$.
  There is a continuous function $\delta (x)$ such that every linear globally Lipschitz map $\psi: E_1 \rightarrow F_1$
 which is $\delta$-approximation to $\varphi_1$ is an isomorphism (see Remark~\ref{open2}).
 Let $\pi : F \rightarrow F_1$ be the projection given by $\pi(f_0+f_1)=f_1$, and let $\kappa = \mathrm{Id}_F - \pi$.
 It is immediate that $\pi$ is linear and globally Lipschitz and $\Img(\kappa) = F_0$. Choose $\varepsilon \in (0,\delta / \mathpzc{Lip}(\pi))$
 small enough, in view of Proposition~\ref{zero}, we can assume that each $\phi \in \mathcal{L}(E,F)$ which is $\varepsilon$-approximation to $\varphi$ 
belongs to $\mathcal{LF}_l(E,F)$. 

Now we show that each such $\phi$ is transversal to $F_0$. Let $\Phi \coloneq (\pi \circ \varphi)|_{E_1} \in \mathcal{L}(E_1,F_1)$.
Then $d(\Phi,\varphi_1) \leqq \mathpzc{Lip}(\pi)\varepsilon < \delta$ and so $\Phi \in \Iso(E_1,F_1)$ (see Remark~\ref{open2}).
Thus, we only need to prove $F=\Img(\phi)+F_0$. Let $f \in F=F_0 \oplus F_1$ so $f=f_0+f_1$, where $f_i \in F_i (i=0,1)$.
We have $\Phi^{-1}(f_1)=e_1 \in E_1 \subseteq E$, $x=\phi(e_1) \in \Img(\phi)$, and $y = f_0 - \kappa (x) \in F_0$. Whence,
$x+y= \pi(x)+\kappa(x)+f_0-\kappa(x)=f_0+\Phi(e_1)=f_0+f_1 =f$, therefore $F=\Img(\phi)+F_0$.
\end{proof}
Now we prove the transversality theorem for $\mc$-Lipschitz-Fredholm maps. It is indeed
a consequence of the Sard's theorem for these maps~\cite[Theorem 4.3]{k}. A careful reading of the
proof of the theorem shows that the minor assumption of endowing manifolds with compatible metrics is superfluous
and the theorem remains valid for manifolds without compatible metrics. Thus,
the statement of the theorem is as follows:
\begin{theorem}[Sard's Theorem]\label{sard}
 Let $ M$ and $ N $ be $\mc$ manifolds, $k\geqq1$. If $ f: M \rightarrow N $ is an
$\mc$-Lipschitz-Fredholm map with $  k > \max \lbrace {\Ind f,0} \rbrace $. Then, the set of regular
values of $ f $ is residual in $ N $.
\end{theorem}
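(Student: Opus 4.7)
The plan is to adapt the classical Smale argument to the bounded Fr\'{e}chet category, essentially repeating the proof of~\cite[Theorem 4.3]{k}; the statement above follows from the cited proof upon observing that its appeal to compatible metrics on the manifolds is a matter of convenience, not of substance. The overall strategy is to reduce locally, via a normal form for Lipschitz-Fredholm maps, to the classical finite-dimensional Sard theorem, and then glue over countably many charts using the Baire property of $N$.

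First I would use the second countability of $M$ to cover it by countably many chart domains $(U_\alpha,\phi_\alpha)$, each chosen small enough that $f(U_\alpha)$ lies inside a single chart $(V_\alpha,\psi_\alpha)$ of $N$. Since $N$ is Baire and a countable union of nowhere dense sets is meager, it suffices to show that the critical values of $f|_{U_\alpha}$ are nowhere dense in $V_\alpha$ for every $\alpha$. At any point $p\in U_\alpha$, the derivative $\operatorname{D}f(p)\in\mathcal{LF}_{\Ind f}(E,F)$ has split finite-dimensional kernel and closed split image of finite codimension $m$ by Theorem~\ref{comp2}, so the model spaces decompose as
$$E \;=\; \ker\operatorname{D}f(p)\oplus E_1,\qquad F \;=\; F_1\oplus C,$$
with $\dim C = m$, $\dim\ker\operatorname{D}f(p) = \Ind f + m$, and $\operatorname{D}f(p)|_{E_1}\in\Iso(E_1,F_1)$. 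Writing $f=(f_1,f_2)$ in these splittings and considering the auxiliary map $G(u,v)\coloneq (f_1(u,v),v)$, the derivative $\operatorname{D}G(p)$ is an isomorphism of Fr\'{e}chet spaces, so the Nash-Moser-type bounded inverse function theorem~\cite[Theorem 4.7]{m} makes $G$ a local $\mc$-diffeomorphism. Post-composing $f$ with $G^{-1}$ yields a straightened form
$$\tilde f(u,v) \;=\; \bigl(u,\eta(u,v)\bigr),$$
with $v$ ranging in an open subset of $\rr^{\Ind f + m}$ and $\eta$ an $\mc$-map into $C\cong\rr^m$.

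From this normal form, $(u_0,c_0)$ is a critical value of $\tilde f$ precisely when $c_0$ is a critical value of the finite-dimensional map $\eta(u_0,\cdot):\rr^{\Ind f + m}\to\rr^m$, because the only possible failure of surjectivity of $\operatorname{D}\tilde f(u_0,v_0)$ lies in the $v$-block. The hypothesis $k>\max\{\Ind f,0\}$ is exactly the differentiability threshold required by the classical Sard theorem for $\eta(u_0,\cdot)$, whose critical values then form a Lebesgue-null and a fortiori nowhere dense subset of $\rr^m$. Given any nonempty open $W\subseteq V_\alpha$, choose $(u_0,c_0)\in W$; a small $C$-neighborhood of $c_0$ inside the $u_0$-slice of $W$ contains a regular value $c_1$ of $\eta(u_0,\cdot)$ by Sard, and the corresponding point $(u_0,c_1)\in W$ is a regular value of $\tilde f$, proving nowhere density in $V_\alpha$. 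Baire gluing over the countable cover then yields the desired residuality in $N$.

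The main obstacle is the coexistence of the infinite-dimensional $u$-direction and the finite-dimensional $v$-direction in $\tilde f$: the classical Sard theorem cannot be applied to $\tilde f$ itself, and one must extract local nowhere density by a pointwise use of Sard on $\eta(u_0,\cdot)$ for an appropriately chosen $u_0$, together with the observation that every nonempty open subset of the product target is sliced by open subsets of the finite-dimensional fiber. Once this local step is in place, the Baire-category gluing is routine.
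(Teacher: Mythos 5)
Your overall strategy (local straightening via the bounded inverse function theorem, finite-dimensional Sard on the slices, Baire gluing over a countable chart cover) is the same route as the cited proof of~\cite[Theorem 4.3]{k}, which follows Smale. But there is a genuine gap at the gluing step: you correctly state that it suffices to show the critical values of $f|_{U_\alpha}$ are \emph{nowhere dense} in $V_\alpha$, yet what your slice argument actually proves is only that every nonempty open $W\subseteq V_\alpha$ contains a regular value, i.e.\ that the critical value set of the local representative has empty interior (regular values are dense). Empty interior is strictly weaker than nowhere density, and the Baire argument collapses without the stronger property: a countable union of sets, each with dense complement, can be all of $N$, so residuality of the regular values does not follow from density alone.

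The missing ingredient is the local closedness of the critical value set, which in Smale's argument comes from local properness of Fredholm maps: after straightening to $\tilde f(u,v)=(u,\eta(u,v))$, one must shrink each chart to a product neighborhood whose finite-dimensional factor (the $v$-directions, of dimension $\Ind f+m$) is relatively compact with closure inside the domain. The critical point set $\{(u,v): \operatorname{rank}\operatorname{D}_v\eta(u,v)<m\}$ is closed, and on such a product the map $\tilde f$ carries this closed set to a closed subset of the target (the $u$-component is the identity and the $v$-factor is compact), so the critical values of $f$ restricted to the shrunken chart are contained in a closed set with empty interior, hence nowhere dense; only then does the countable union over the refined cover become meager and the regular values residual. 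Your write-up omits this shrinking/properness step entirely, and the choice of the $U_\alpha$ as arbitrary chart domains with $f(U_\alpha)\subseteq V_\alpha$ is not enough to supply it. (Two minor points: the straightening is obtained by \emph{pre}-composing $f$ with $G^{-1}$, i.e.\ $\tilde f=f\circ G^{-1}$, not by post-composition; and one should note explicitly that $\mc$-differentiability gives the usual $C^k$-smoothness of $\eta(u_0,\cdot)$ needed to invoke classical Sard with $k>\max\{\Ind f,0\}$, as the paper records.)
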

\begin{defn}\label{transver}
 Let $f:M\rightarrow N$ be a Lipschitz-Fredholm map and let $\imath: \ab \hookrightarrow N$ be an $MC^1$ embedding
 of a finite dimensional manifold $\mathcal{A}$.
 We say that $f$ is transversal to $\imath$ and write $f \tn \imath $ if 
  $\operatorname{D}f(x)(T_xM) + \operatorname{D}{\imath(y)}(T_y\ab) =T_{f(x)}N$, whenever $f(x)= \imath (y)$.
  It is also said that the submanifold $\mathbf{A} \coloneq \imath{(\ab)}$ is transversal to $f$.
\end{defn}

The following theorem is the analogous of the Smale transversality~\cite[Theorem 3.1]{smale}.
Its proof is just a slight modification of the argument of Smale.

\begin{theorem}
Let $M$ and $N$ be  $\mc$ manifolds modelled on spaces $(F,d)$ and $(E,g)$, respectively.
 Let $f: M \rightarrow N$ be an $\mc$-Lipschitz-Fredholm map and
 let $\imath: \ab \hookrightarrow N$ be an $MC^1$-embedding of a finite dimension manifold $\mathcal{A}$
 with $k> \max \{ \Ind f + \dim \mathcal{A} ,0 \}$. Then
 there exists an $MC^1$ fine approximation $\mathbf{g}$ of $\imath$ such that $\mathbf{g}$ is embedding and  $f \tn \mathbf{g}$.
 Furthermore, Suppose $S$ is a closed subset of $\ab$ and $f \tn \imath(S)$, then 
 $\mathbf{g}$ can be chosen so that $\imath = \mathbf{g}$ on $S$.  
\end{theorem}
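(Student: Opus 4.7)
The plan is to follow Smale's original argument, with the classical Sard theorem replaced by Theorem~\ref{sard} and the classical openness of transversality replaced by Theorem~\ref{trans}. Set $n \coloneq \dim \ab$. First I would cover $\ab$ by a countable locally finite family of open sets $\{U_i\}$ together with a shrinking $\{A_i\}$, $A_i\subset U_i$, that still covers $\ab$, chosen so that each $\imath(U_i)$ sits inside a chart $(V_i,\phi_i)$ of $N$ in which, after a splitting $E = \rr^n \oplus E_i'$ furnished by Theorem~\ref{comp2}(3), the representation of $\imath$ is the inclusion $y \mapsto (y,0)$. The key linearisation is the auxiliary map
\[
\Psi_i \coloneq \pi_{E_i'} \circ \phi_i \circ f : f^{-1}(V_i) \longrightarrow E_i',
\]
which is $\mc$-Lipschitz-Fredholm of index $\Ind f + n$ because the linear projection $\pi_{E_i'}$ is itself Lipschitz-Fredholm of index $n$. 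A constant shift $\mathbf{g}_i(y) \coloneq \phi_i^{-1}(y,v_i)$ is transversal to $f$ on $U_i$ precisely when $v_i$ is a regular value of $\Psi_i$; since $k>\max\{\Ind f+n,0\}$, Theorem~\ref{sard} delivers such regular values arbitrarily close to $0$.

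Next I would patch inductively with an $MC^1$ partition of unity $\{\chi_i\}$ subordinate to $\{U_i\}$ with $\chi_i \equiv 1$ on a neighborhood of $A_i$. Having produced $\mathbf{g}^{(i-1)}$ transversal to $f$ on $\bigcup_{j<i}A_j$ and coinciding with $\imath$ outside $\bigcup_{j<i}U_j$, I modify only inside $U_i$ by
\[
\phi_i(\mathbf{g}^{(i)}(y)) \coloneq \phi_i(\mathbf{g}^{(i-1)}(y)) + (0,\chi_i(y)v_i).
\]
The transversality already secured on $\bigcup_{j<i}A_j$ is an open condition on the $1$-jet, thanks to Theorem~\ref{trans} applied pointwise to $\operatorname{D}f(x)$ and the tangent image of the perturbed embedding, so choosing $v_i$ sufficiently small and a regular value of the updated $\Psi_i$ preserves transversality on the earlier pieces while establishing it on $A_i$. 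Local finiteness of $\{U_i\}$ ensures that the sequence $\mathbf{g}^{(i)}$ stabilises locally and defines an $MC^1$ map $\mathbf{g}$ transversal to $f$ throughout $\ab$.

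For the condition on $S$ I would use that $f\tn\imath(S)$, together with the same jetwise openness, to find a neighborhood $W$ of $S$ on which $\imath$ itself is already transversal to $f$; premultiplying each $\chi_i$ by an $MC^1$ cutoff vanishing on a smaller neighborhood of $S$ forces $\mathbf{g}=\imath$ on $S$ without destroying any transversality obtained elsewhere. A pre-assigned fine neighborhood of $\imath$ imposes, chart by chart, an upper bound on the metric size of $v_i$, always achievable, so $\mathbf{g}$ becomes an $MC^1$ fine approximation to $\imath$; the embedding property is preserved because injectivity of the derivative with split image is open in $\mathcal{L}_{g,d}$ (Remark~\ref{open2}) and global injectivity follows from the fineness control on a locally finite cover. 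The hard part will be this triple reconciliation in the patching step — the smallness of $v_i$ for the fine bound, its regularity for the updated $\Psi_i$ (which itself changes with each induction step), and the preservation of global injectivity — and its feasibility rests on the local finiteness of the cover together with the openness results of Section~3 (Proposition~\ref{zero} and Theorem~\ref{trans}) transferring the linear stability picture to the nonlinear manifold setting.
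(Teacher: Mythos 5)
Your proposal follows essentially the same route as the paper (Smale's argument adapted to the bounded Fr\'{e}chet setting): normalize the embedding in a chart with a splitting $E=\rr^n\oplus E_1$, observe that the projection onto the complement composed with $f$ is Lipschitz--Fredholm of index $\Ind f+\dim\mathcal{A}$ (which is exactly why the hypothesis $k>\max\{\Ind f+\dim\mathcal{A},0\}$ appears), obtain a regular value near $0$ from Theorem~\ref{sard}, and shift the embedding by it through a cutoff function, with fine control on the size of the shift. Your explicit locally finite cover, inductive patching, and cutoff near $S$ simply spell out the globalization that the paper compresses into ``we only need to work in local coordinates,'' so the proposal is correct and matches the paper's approach.
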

\begin{proof}
 Since manifolds are second countable we only need to work in local coordinates.
 Assume that $y \in \ab$ and $n = \dim \imath(\ab)$. Since $\imath(\ab)$ is an embedded submanifold of finite type of $N$,
 we may find an open neighborhood $U \subset \rr^n$ about $y$, a chart about $\imath(y)$ and a splitting $E = \rr^n \oplus E_1$ such that 
 $\imath(y) = \imath(y,0)$ in the neighborhoods. Let $\pi_2:E \rightarrow E_1$ be the projection onto $E_1$. Let
 $\overline{V} \subset U$ be a neighborhood of $y$, and $h$ a smooth
 real valued function which is $1$ on $\overline{V}$ and
 $0$ outside of $U$. Since $\pi_1 \circ f$ is locally Fredholm-Lipschitz
 it follows by Sard's Theorem (Theorem~\ref{sard}) that there is a regular value $z$ for $\pi_1 \circ f$ which is close to $0$.
 Now define
$$
 \mathbf{g}(y)= h(y)(z,y)+ [1-h(y)]\imath(y).
$$
It is immediate that $f \tn \mathbf{g}$ on $V$, and for $z$ sufficiently close to $0$, 
 $\mathbf{g}$ is $MC^1$ fine approximation to $\imath$. The second statement follows by our definition of $\mathbf{g}$.
 \end{proof}

\section{Transversal submanifolds}

We will need the following inverse function theorem.
\begin{theorem}[\cite{m}, Theorem 4.7. Inverse Function Theorem for $ MC^k$-maps] \label{invr}
Let $ (E,g) $ be a Fr\'{e}chet space with standard metric $g$. Let $U \subset E$ be open, $x_0 \in U$ and
$ f : U \subset E \rightarrow E $  an $ MC^k$-map, $ k \geq 1 $.
 If $f'(x_0) \in \Aut{(E)}  $, then there exists an open neighborhood
 $ V \subseteq U $ of $ x_0 $ such that $ f(V) $ is open in $ E $ and $ f\vert_V : V \rightarrow f(V) $ 
is an $ MC^k$- diffeomorphism.
\end{theorem}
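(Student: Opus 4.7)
The plan is to follow the classical Banach-space proof of the inverse function theorem, taking advantage of the observation that the bounded Fr\'{e}chet category provides a Lipschitz-metric topology on $\mathcal{L}_{g,g}(E,E)$ in which $\Aut(E)$ is open (Proposition~\ref{open}), so that one has genuine Banach-like control on invertibility of derivatives. After translating in the source and composing with $f'(x_0)^{-1}\in\Aut(E)$ in the target, I would reduce to the normalized situation $x_0=0$, $f(0)=0$, $f'(0)=\mathrm{Id}_E$. Setting $\varphi(x)\coloneq x-f(x)$, the $MC^1$ hypothesis says that $\varphi'$ is continuous into $\mathcal{L}_{g,g}(E,E)$ for the Lipschitz metric, so $\varphi'(0)=0$ yields an open convex neighborhood $V$ of $0$ on which $\mathpzc{Lip}(\varphi'(x))_{g,g}<1/2$ uniformly.

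Next I would recast the equation $f(x)=y$ as the fixed point problem $x=\Psi_y(x)\coloneq y+\varphi(x)$ and apply the Banach contraction principle in the complete metric space $(E,g)$. A mean value inequality for Keller-differentiable maps --- obtained by integrating the continuous curve $t\mapsto\varphi'(tx_1+(1-t)x_2)(x_1-x_2)$ along the segment, where the Riemann integral exists by completeness of $E$ and the absolute convexity of $g$-balls keeps segments inside $V$ --- gives $g(\varphi(x_1)-\varphi(x_2),0)\leq\tfrac{1}{2}\,g(x_1-x_2,0)$ on $V$. Choosing $r>0$ small so that the closed $g$-ball $\overline{B}_r(0)$ lies in $V$, the map $\Psi_y$ sends $\overline{B}_r(0)$ into itself and is a $\tfrac{1}{2}$-contraction whenever $g(y,0)\leq r/2$, producing a unique fixed point $x=:f^{-1}(y)$. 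Continuity of $f^{-1}$ is immediate from the contraction estimate, and openness of the image $f(V)$ follows by repeating the construction around each of its points.

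To promote $f^{-1}$ to an $MC^k$-map I would invoke Proposition~\ref{open} once more: after shrinking $V$ if necessary, $f'(x)\in\Aut(E)$ for every $x\in V$, and $x\mapsto f'(x)^{-1}$ is continuous because inversion is continuous in the topological group $\Aut(E)$. The usual computation then shows $f^{-1}$ is Keller-differentiable with $(f^{-1})'(y)=f'(f^{-1}(y))^{-1}\in\mathcal{L}_{g,g}(E,E)$; this derivative is continuous into $\mathcal{L}_{g,g}(E,E)$ as the composition $\mathrm{inv}\circ f'\circ f^{-1}$ of three continuous maps, so $f^{-1}$ is $MC^1$. An induction on $k$, differentiating the same identity and invoking the chain and product rules in the bounded category, upgrades the conclusion to $MC^k$.

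I expect the main obstacle to be the mean value inequality, since in a Fr\'{e}chet space one cannot simply quote the Bochner integral as a black box. One must exploit the standard metric hypothesis --- both the absolute convexity of the $g$-balls, so that line segments stay in $V$, and the continuity of $g(\cdot,0)$ under Riemann sums --- to control $g(\varphi(x_1)-\varphi(x_2),0)$ by the supremum of $\mathpzc{Lip}(\varphi'(\cdot))_{g,g}$ along the connecting segment. Once this inequality is in hand, the Lipschitz metric on $\mathcal{L}_{g,g}(E,E)$ behaves formally like an operator norm and every subsequent step imitates its Banach analogue.
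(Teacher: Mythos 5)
This theorem is not proved in the paper at all: it is imported verbatim from M\"uller \cite{m} (Theorem 4.7), so there is no internal proof to compare your argument against. Your sketch follows essentially the same route as the cited source: normalize to $f'(x_0)=\mathrm{Id}_E$ using $\Aut(E)$, rewrite $f(x)=y$ as a fixed-point problem for $\Psi_y(x)=y+\varphi(x)$ with $\varphi=\mathrm{id}-f$, use $MC^1$-continuity of $x\mapsto f'(x)$ into $\mathcal{L}_{g,g}(E,E)$ to get a uniform bound $\mathpzc{Lip}(\varphi'(x))_{g,g}<1/2$ near $0$, derive a mean value estimate from the Riemann integral along segments together with the absolute convexity of the balls of the standard metric, and run the contraction principle in the complete metric space $(E,g)$. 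This is exactly the mechanism that makes the theorem true in the bounded category, and your identification of where the standard-metric hypothesis enters is correct.

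One caveat: the step you dismiss as ``the usual computation'' --- Keller-differentiability of $f^{-1}$ and the upgrade to $MC^k$ --- is in fact the most delicate part, and your sketch has a gap there. The metric $g$ is translation-invariant but not homogeneous, $g(tv,0)\neq |t|\,g(v,0)$ in general, so the bi-Lipschitz estimate $g(w_t,0)\leq 2\,g(th,0)$ for $w_t=f^{-1}(y+th)-f^{-1}(y)$ does not by itself control the difference quotients $w_t/t$, and the little-$o$ bookkeeping of the Banach proof does not transfer verbatim; one must redo the remainder estimates using the quantitative $\mathpzc{Lip}$-calculus (this is precisely where the cited proof expends its effort). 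Relatedly, you need continuity of inversion on $\Aut(E)$ with respect to the metric $D_{g,g}$ in order to conclude that $y\mapsto f'(f^{-1}(y))^{-1}$ is continuous; this is not stated in the present paper (which only records openness of $\Iso(E,F)$ and the additive topological group structure), though it does follow from the subadditivity and submultiplicativity of $\mathpzc{Lip}$ via a Neumann-series argument, i.e.\ from Gl\"ockner's results cited here, and you should invoke it explicitly rather than take it for granted.
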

To avoid some technical complications we consider only manifolds without  boundary in the sequel.
\begin{theorem}
 Let $M$ and $N$ be $\mc$ manifolds modelled on spaces $(F,d)$ and $(E,g)$, respectively. 
 Suppose that $f: M \rightarrow N$ is an $\mc$-Lipschitz-Fredholm map of index $l$. 
 Let $\ab$ be a submanifold of $N$ with dimension $m$ and let $\imath: \ab \hookrightarrow N$ be the inclusion.
 If $f$ is transversal to $\ab$, then $f^{-1}(\ab)$ is a submanifold of $M$ of dimension $l+m$. For all $x \in f^{-1}(\ab)$
 we have $T_x(f^{-1}(\ab))=(T_xf)^{-1}(T_{f(x)}\ab)$.
\end{theorem}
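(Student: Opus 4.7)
The strategy is to produce, for each $x \in f^{-1}(\ab)$ with $y := f(x)$, a submanifold chart for $f^{-1}(\ab)$ near $x$ by combining Proposition~\ref{spl} with the inverse function theorem (Theorem~\ref{invr}). Since $T_y\ab$ is finite-dimensional, it is closed and topologically complemented in $E$ by Theorem~\ref{comp2}, and transversality at $x$ gives $\Img(T_xf) + T_y\ab = E$. I would apply Proposition~\ref{spl} to the operator $T_xf \in \mathcal{LF}_l(F,E)$ and the closed subspace $T_y\ab$. This yields topological decompositions $E = T_y\ab \oplus G$ and $F = K \oplus L$, with $K \supseteq \ker T_xf$, such that $\Phi := T_xf|_L : L \to G$ is a topological isomorphism.

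Next, I pick a submanifold chart $(V,\psi)$ around $y$ with $\psi(y) = 0$ and $\psi(\ab \cap V) = \psi(V) \cap T_y\ab$, and a chart $(U,\phi)$ around $x$ with $\phi(x) = 0$ adapted to $F = K \oplus L$. Setting $\tilde{f} := \psi \circ f \circ \phi^{-1}$ and letting $\pi : E \to G$ denote the continuous linear projection along $T_y\ab$, define
\[
H : \phi(U \cap f^{-1}(V)) \longrightarrow F = K \oplus L, \qquad H(a,b) := \bigl(a,\, \Phi^{-1}(\pi(\tilde{f}(a,b)))\bigr).
\]
This is $\mc$ as a composition of an $\mc$-map with continuous linear globally Lipschitz maps. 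A direct calculation gives $H'(0)(a,b) = (a,\, b + \beta(a))$ where $\beta := \Phi^{-1} \circ \pi \circ T_xf|_K \in \mathcal{L}_{g,g}(K,L)$; its block-triangular form shows that $H'(0) \in \Aut(F)$ with inverse $(a,c) \mapsto (a,\, c - \beta(a))$. Theorem~\ref{invr} then provides an open $W \ni 0$ on which $H$ is an $\mc$-diffeomorphism. The key observation is that $(a,b) \in W$ lies in $\phi(U \cap f^{-1}(\ab))$ iff $\pi(\tilde{f}(a,b)) = 0$ iff $H(a,b) \in K \oplus \{0\}$; after shrinking $W$ so that $H(W)$ has product form inside $K \oplus L$, this produces the required submanifold chart modelled on $K$.

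For the dimension count, the construction inside the proof of Proposition~\ref{spl} decomposes $K = \ker T_xf \oplus \mathbb{E}_0$, where $\dim \mathbb{E}_0 = \dim(\Img T_xf \cap T_y\ab) = m - m'$ with $m' := \codim \Img T_xf$; combined with $\dim \ker T_xf = l + m'$ this yields $\dim K = l + m$. The tangent-space identity follows since $T_x(f^{-1}(\ab))$ corresponds under $\phi$ to $H'(0)^{-1}(K \oplus \{0\}) = \{(a, -\beta(a)) : a \in K\}$, and a short calculation shows this equals $(T_xf)^{-1}(T_y\ab)$. The main technical obstacle is verifying at each step that the linear ingredients $\pi$, $\Phi^{-1}$, and $\beta$ are globally Lipschitz in the $\mathcal{L}_{g,d}$-sense, so that $H$ is genuinely $\mc$ and $H'(0)$ sits in $\Aut(F)$ as required by Theorem~\ref{invr}; each of these reduces to Proposition~\ref{open} and Theorem~\ref{comp2}, but the bookkeeping to align all the Fr\'echet-space splittings demands care.
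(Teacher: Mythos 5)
Your proposal is correct and takes essentially the same route as the paper: apply Proposition~\ref{spl} to $T_xf$ and $T_y\ab$, straighten the local representative by the map $H=(\text{projection onto }K)+\Phi^{-1}\circ\pi\circ\tilde f$ (which is the paper's $H=\pi+\Delta^{-1}\circ\pi_1\circ\mathbf{f}$ written in block form), invoke the inverse function theorem (Theorem~\ref{invr}), and read off the submanifold chart and the tangent-space identity from the equivalence $f(x)\in\ab\Leftrightarrow H\in K\oplus\{0\}$, with the same dimension count via the construction in Proposition~\ref{spl}. One minor remark: since $T_xf(K)\subseteq T_y\ab=\ker\pi$, your $\beta$ is actually the zero map, so $H'(0)=\mathrm{Id}_F$ exactly as in the paper's computation.
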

\begin{proof}
  If $f^{-1}(\ab) = \emptyset$ the theorem is clearly valid so let $f^{-1}(\ab) \neq \emptyset$.
 Let $(\psi,U)$ be a chart at $f(x_0) \in \ab$ in $N$ with the submanifold property for $\ab$.
 Let $U_1,U_2$ be open subsets of $E,\rr^m$ such that $\psi (U) = U_1 \oplus U_2, \psi(U \cap \ab) = U_1 \oplus \{ 0\}$, and 
 $\psi(f(x_0)) =(0,0)$. Let $(V,\varphi)$ be a chart at $x_0$ in $M$ such that $\varphi(x_0) =0$, 
 $\varphi :V \rightarrow \varphi(V) \subset F$ and $f(V) \subset U$. Let
 $$\mathbf{f} \coloneq  \psi \circ f\circ \varphi^{-1} : \varphi(V) \rightarrow \psi(U)$$
 be the local representative of $f$. Then $\mathbf{f}(0)= (0,0)$ and by hypothesis $\mathbf{f}$ is a  Lipschitz-Fredholm map, in particular, 
 $\operatorname{D}\mathbf{f}(0) \in \mathcal{LF}_l(F,E)$. The tangent map $T_{f(x_0)}\imath: T_{f(x_0)}\ab \rightarrow T_{f(x_0)}N$
 is injective with  closed split image. Hence $T_{f(x_0)}\ab$ can be identified with a closed split subspace of $T_{f(x_0)}N$.
 Thus $\operatorname{D}f(x_0)$ is transversal to $T_{f(x_0)}\ab$. Therefore, keeping in the mind the definition of the differential in terms of tangent maps,
 $\operatorname{D}\mathbf{f}(0)$ is transversal to $T\psi(T_{f(x_0)}\ab)= U_1 \oplus \{ 0\} \eqqcolon E_1$. Then, by virtue of
 Proposition~\ref{spl} there are closed subsets $F_1 \subset F$, $E_0 \subset E$ such that
 $F = F_1 \oplus (F_0 \coloneq \operatorname{D}\mathbf{f}(0)(E_0)) $, $E =E_1 \oplus E_0$, 
 $\Delta \coloneq \operatorname{D}\mathbf{f}(0) \mid_{F_0} \in \Iso(F_0,E_0) $  and 
  $\Delta_1 \coloneq \operatorname{D}\mathbf{f}(0) \mid_{F_1} \in \Iso(F_1,E_1) $.
  Moreover, $\dim F_0 = m+l$.
  
  Consider the projection $\pi : F \rightarrow F_1$ given by $\pi(f_0+f_1) =f_1$. Since $F_1$ and $F_0$ are closed and
  complementary it follows that obviously
  the map $\kappa = \mathrm{Id}_{F}-\pi$ is the unique projection with $\Img {(\kappa)} = F_0 $ and $\ker(\kappa)=F_1$ .
  Let $\pi_1 : E \rightarrow E_0 $ be the projection given by $\pi_1(e_0+e_1)=e_0$. Then,
  $\Pi \coloneq \Delta ^{-1} \circ \pi_1 \circ \operatorname{D}\mathbf{f}(0)$ is a projection 
  with $\Img{(\Pi)} = F_0$ and $F_1 \subseteq \ker{(\Pi)}$. Since $F=F_0 \oplus F_1$, it follows that
  $F_1 = \ker{(\Pi)}$ and therefore $\Pi= \kappa$.
  
  Now define the map $H: \varphi(V) \rightarrow F$ of class $\mc$ by
  $H(x) = \pi(x) + \Delta ^{-1} \circ \pi_1 \circ \mathbf{f}(x) $.
  We obtain that $H(0)=0$ and
  $\operatorname{D}H(0) = \pi + \Delta ^{-1} \circ \pi_1 \circ \operatorname{D}\mathbf{f}(0)= \pi + \kappa = \mathrm{Id}_F$.
  If we choose $V$ small enough, then by Theorem~\ref{invr} $H$ is an $\mc$-diffeomorphism onto an open neighborhood
  $\mathcal{U} \subseteq \psi(U)$ of $\psi(f(x_0) = (0,0)$. Let $\varPhi = H \circ \varphi^{-1}$, then
  $(\varPhi,F_0)$ is a chart for $x_0$ on $V$ with the submanifold property. Because we have
  $$
  x \in f^{-1}(\ab) \Leftrightarrow \psi(f(x)) \in U_1 \oplus \{0\} \Leftrightarrow \mathbf{f}(\varphi(x))
  \in U_1 \oplus \{0\} \Leftrightarrow H(\varphi(x)) \in F_0.
  $$
  Let $p \in \ab$, $\gamma : \rr \rightarrow M$ a smooth curve sending zero to $p$, and $j_p^1 \gamma$
  the $1$-jet of $\gamma$ at $p$.
  \begin{align}
   j^1_p \gamma \in T_p \ab \Leftrightarrow& j^1_{\varphi(p)}(\varphi \circ \gamma) = T\varphi(j^1_p\gamma) \in \varphi(V)\times F, \,  \varphi\circ \gamma \subset \varphi(V) \nonumber \\
   \Leftrightarrow& T \mathbf{f} (j^1_{\varphi(p)}(\varphi \circ \gamma)) \in \psi(U) \times E\nonumber \\
   \Leftrightarrow&\mathbf{f}(\varphi \circ \gamma) = \psi(f \circ \gamma) \subset \psi(U) \nonumber \\
   \Leftrightarrow& \dfrac{\mathrm{d}}{\mathrm{d}t} \psi(f \circ \gamma) \mid_{t=0} =v=
   \dfrac{\mathrm{d}}{\mathrm{d}t} \psi([\psi^{-1}(\psi (f(x))+tv)]), \psi( f \circ \gamma ) \subset \psi (U) \nonumber\\
   \Leftrightarrow& j^1_{f(p)}(f \circ \gamma)=j^1_p [\psi^{-1}(\psi (f(p))+tv)] \in T_{f(p)}\ab \nonumber \\
   \Leftrightarrow& j^1_p \gamma \in (T_pf)^{-1}(T_{f(p)}\ab) \nonumber
  \end{align}
This proves the second assertion.
\end{proof}
If  manifolds have nonempty boundary we just need to modify the proof by extending the considered maps.
\begin{cor}
 Let $f: M \rightarrow N$ be an $\mc$-Lipschitz-Fredholm map of index $l$. If $y$ is a regular value of  $f$,
 then the level set $f^{-1}(y)$ is a submanifold  of dimension $l$ and its tangent space at $x$ is $\ker T_xf$.
\end{cor}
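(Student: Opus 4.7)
The plan is to reduce this corollary to the preceding preimage theorem by viewing the single point $\{y\}$ as a zero-dimensional embedded submanifold of $N$. Concretely, set $\ab \coloneq \{y\}$ with $m \coloneq \dim \ab = 0$; this is trivially a submanifold of $N$ in the sense of Section 3, modelled on the zero subspace $\{0\}$ of $E$, which obviously splits. Let $\imath : \ab \hookrightarrow N$ denote the inclusion, so that $T_y\ab = \{0\}$ and $\operatorname{D}\imath(y)$ is the zero map.

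Next I would verify the transversality hypothesis $f \tn \imath$ of Definition~\ref{transver}. For any $x \in f^{-1}(y)$ one has $f(x) = \imath(y)$, and the required condition
\[
\operatorname{D}f(x)(T_xM) + \operatorname{D}\imath(y)(T_y\ab) = T_{f(x)}N
\]
reduces, since the second summand is $\{0\}$, to the surjectivity of $T_xf : T_xM \rightarrow T_yN$. But this is precisely the statement that $x$ is a regular point of $f$, which holds for every $x \in f^{-1}(y)$ by the assumption that $y$ is a regular value.

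Finally, I would invoke the preceding preimage theorem with the above choice of $\ab$ and $\imath$ to conclude that $f^{-1}(\ab) = f^{-1}(y)$ is a submanifold of $M$ of dimension $l + m = l$, and that for each $x \in f^{-1}(y)$,
\[
T_x\bigl(f^{-1}(y)\bigr) = (T_xf)^{-1}(T_y\ab) = (T_xf)^{-1}(\{0\}) = \ker T_xf,
\]
as claimed. I do not anticipate any real obstacle here: the entire content of the corollary is the tautology that transversality to a point coincides with pointwise surjectivity of the derivative, which is the very definition of a regular value, so the corollary is a genuine specialisation of the theorem rather than a new computation.
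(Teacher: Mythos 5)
Your proposal is correct and follows essentially the same route as the paper: the paper's proof likewise observes that the point $\{y\}$ is transversal to $f$ (regularity of $y$ being exactly transversality to a zero-dimensional submanifold) and then applies the preceding theorem, with dimension $l+0=l$ and tangent space $(T_xf)^{-1}(\{0\})=\ker T_xf$. Your write-up merely spells out these verifications in more detail.
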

\begin{proof}
 The set $\{y\}$ is transversal to $f$ so the result follows from the theorem.
\end{proof}
\begin{cor}
 Let $f : M \times N \rightarrow O$ be a smooth Lipschitz-Fredholm map of manifolds, we write $f_x \coloneq f(\cdot,x)$,  and let $\ab$  be a closed finite dimension submanifold
 of $O$. Assume that $f \tn \ab$ and for all $(m,n) \in f^{-1}_n(\ab)$ 
 the composition $(T_mM \xrightarrow{\operatorname{D}f_n(m)} T_{f_n(m)} O \xrightarrow{Q} T_{f_n(m)}O/T_nS )$ is Lipschitz-Fredholm. Then there is a residual
 set of $n$ in $O$ for which the map $f_n: M \rightarrow O$ is transversal to $\ab$.
\end{cor}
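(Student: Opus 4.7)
The strategy is the classical parametric transversality argument: reduce transversality of $f_n$ to regularity of a projection, then invoke Sard's theorem. First I would apply the preceding theorem to the transversal pair $(f,\ab)$ to conclude that $W \coloneq f^{-1}(\ab)$ is an $\mc$ submanifold of $M\times N$ whose tangent space at any $(m,n)\in W$ is
\[
T_{(m,n)}W \;=\; (T_{(m,n)}f)^{-1}\bigl(T_{f(m,n)}\ab\bigr).
\]
Let $\pi:W\to N$ be the (smooth) restriction of the projection $(m,n)\mapsto n$. The goal is to show that $\pi$ is Lipschitz-Fredholm and that every regular value $n$ of $\pi$ satisfies $f_n\tn\ab$; then Theorem~\ref{sard} delivers the required residual set.

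Second, I would identify the linearization of $\pi$ with the operator given in the hypothesis. Write $L\coloneq Q\circ \operatorname{D}f_n(m)\colon T_mM\to T_{f(m,n)}O/T_{f(m,n)}\ab$ and $\alpha\colon T_nN\to T_{f(m,n)}O/T_{f(m,n)}\ab$, $w\mapsto Q(\operatorname{D}f(m,n)(0,w))$. Then $T_{(m,n)}W=\{(u,w):L(u)+\alpha(w)=0\}$, and direct inspection gives a natural isomorphism $\ker(\operatorname{D}\pi(m,n))\cong\ker(L)$ via $u\mapsto(u,0)$, while $\alpha$ descends to an isomorphism $\operatorname{coker}(\operatorname{D}\pi(m,n))\xrightarrow{\cong}\operatorname{coker}(L)$, with surjectivity coming precisely from $f\tn\ab$. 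By hypothesis $L$ is Lipschitz-Fredholm, so $\operatorname{D}\pi(m,n)$ has finite-dimensional kernel and finite-codimension closed image in $T_nN$; splitness of both is automatic by Theorem~\ref{comp2}. Hence $\pi$ is an $\mc$ Lipschitz-Fredholm map, and its index equals $\operatorname{Ind}(L)$, which is finite.

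Third, since $f$ is smooth, the order condition of Theorem~\ref{sard} is trivially satisfied, so the set $R$ of regular values of $\pi$ is residual in $N$. For $n\in R$ it remains to verify $f_n\tn\ab$. If $f_n^{-1}(\ab)=\emptyset$ this is vacuous. Otherwise, fix $m\in f_n^{-1}(\ab)$, so $(m,n)\in W$, and take any $v\in T_{f_n(m)}O$. Using $f\tn\ab$ at $(m,n)$ write $v=\operatorname{D}f(m,n)(u,w)+a$ with $a\in T_{f(m,n)}\ab$. Regularity of $\pi$ at $(m,n)$ furnishes $u'\in T_mM$ with $(u',w)\in T_{(m,n)}W$, hence $\operatorname{D}f(m,n)(u',w)\in T_{f(m,n)}\ab$. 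Subtracting,
\[
v \;=\; \operatorname{D}f_n(m)(u-u') \;+\; \bigl[\operatorname{D}f(m,n)(u',w)+a\bigr],
\]
which is the desired decomposition into $\operatorname{Img}(\operatorname{D}f_n(m))+T_{f(m,n)}\ab$. The splitting clause of Definition~\ref{transver} holds since $T_{f(m,n)}\ab$ is finite-dimensional and hence splits in $T_{f(m,n)}O$ by Theorem~\ref{comp2}(3).

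The main obstacle is the middle step: checking that the Lipschitz-Fredholm hypothesis on $L$ genuinely transfers to $\operatorname{D}\pi$ within the paper's framework. One must ensure that the identification of $\ker$ and $\operatorname{coker}$ is not merely algebraic but respects the Fréchet topology inherited from the submanifold chart of $W$ produced by the previous theorem, so that the resulting operator lies in $\mathcal{LF}(T_{(m,n)}W, T_nN)$ and Theorem~\ref{sard} is indeed applicable.
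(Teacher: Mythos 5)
Your proposal is correct, and in substance it is the classical parametric-transversality argument the paper intends: use the preceding theorem to make $W=f^{-1}(\ab)$ an $\mc$ submanifold, apply Sard's theorem (Theorem~\ref{sard}) to a suitable Lipschitz-Fredholm map out of $W$, and check that regular values give $f_n\tn\ab$. Where you genuinely diverge from (and improve on) the paper's written proof is in the choice of the map to which Sard is applied and in the two verifications the paper omits. The paper invokes Sard for $f\mid_B$, $B\coloneq f^{-1}(\ab)$, speaks of its regular values ``in $O$'', and then simply asserts that a regular value $n\in N$ makes $f_n$ transversal; as literally written this does not produce a residual set of parameters in $N$, since regular values of $f\mid_B$ live in $O$, and the final implication is not argued. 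Your version applies Sard to the projection $\pi\colon W\rightarrow N$, which is the map whose regular values are exactly the good parameters; your identifications $\ker(\operatorname{D}\pi)\cong\ker(Q\circ\operatorname{D}f_n(m))$ and $\operatorname{coker}(\operatorname{D}\pi)\cong\operatorname{coker}(Q\circ\operatorname{D}f_n(m))$, with surjectivity of the descended map coming from $f\tn\ab$, are precisely where the corollary's Fredholm hypothesis enters, and your subtraction argument for ``$n$ a regular value of $\pi$ implies $f_n\tn\ab$'' supplies the step the paper only states. As for the obstacle you flag (whether $\operatorname{D}\pi$ is genuinely in $\mathcal{LF}(T_{(m,n)}W,T_nN)$ so that Theorem~\ref{sard} applies): the preceding theorem gives $\dim W=\Ind f+\dim\ab<\infty$, so $T_{(m,n)}W$ is finite-dimensional; closedness and splitting of $\Img(\operatorname{D}\pi)$ and the Lipschitz estimate are then routine (Theorem~\ref{comp2}), so your outline closes into a complete proof, more detailed than the one in the paper.
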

\begin{proof}
 By hypothesis the kernel of $Q \circ \operatorname{D}f(x)$ is complemented for all $x \in f^{-1}(\ab)$.
 By the preceding  theorem $B\coloneq f^{-1}(\ab)$ is a Fr\'{e}chet submanifold. The map $f \mid_B$ is smooth Lipschitz-Fredholm,
 therefore by Sard's theorem there is a residual set of regular values of it in $O$. If $n \in N$ is a regular value of
 $f \mid_B$, then $f_n$ is transversal to $\ab$.
 
\end{proof}

\end{document}